\DeclareMathOperator{\Spec}{Spec}
\DeclareMathOperator{\cdim}{cdim}
\DeclareMathOperator{\im}{im}
\DeclareMathOperator{\trdeg}{tr.deg}
\DeclareMathOperator{\ind}{ind}
\DeclareMathOperator{\Td}{Td}
\DeclareMathOperator{\ch}{ch}
\DeclareMathOperator{\CH}{CH}
\DeclareMathOperator{\Br}{Br}
\DeclareMathOperator{\lgth}{length}
\newcommand{\de}[1]{d_{#1}}
\newcommand{\colim}[1]{\mycolim_{#1}}
\DeclareMathOperator*{\mycolim}{colim}
\newcommand{\dep}[1]{d_p({#1})}
\newcommand{\nep}[1]{n_p({#1})}
\newcommand{\M}[2]{{#1}_{#2}(k)}
\newcommand{\MH}[1]{H_{#1}(k)}
\newcommand{\Oc}{\mathcal{O}}
\newcommand{\Zz}{\mathbb{Z}}
\newcommand{\Qq}{\mathbb{Q}}
\newcommand{\Gc}{\mathcal{G}}
\newcommand{\Fp}{\mathbb{F}_p}
\newcommand{\Ab}{\mathsf{Ab}}
\newcommand{\G}{K_0'}
\newcommand{\K}{K_0}
\newcommand{\Gp}{K'}
\newcommand{\Kp}{K}
\newcommand{\Tan}{T}
\newcommand{\cor}{\leadsto}
\newcommand{\Reg}{\mathsf{Reg}}
\newcommand{\Vv}{\mathcal{V}}
\newtheorem{theorem}{Theorem}[section]
\newtheorem*{theorem*}{Theorem}
\newaliascnt{proposition}{theorem}
\newtheorem{proposition}[proposition]{Proposition}
\newaliascnt{lemma}{theorem}
\newtheorem{lemma}[lemma]{Lemma}
\newaliascnt{corollary}{theorem}
\newtheorem{corollary}[corollary]{Corollary}
\theoremstyle{definition}
\newaliascnt{remark}{theorem}
\newtheorem{remark}[remark]{Remark}
\newaliascnt{example}{theorem}
\newtheorem{example}[example]{Example}
\newaliascnt{definition}{theorem}
\newtheorem{definition}[definition]{Definition}
\newaliascnt{conditions}{theorem}
\newtheorem{conditions}[conditions]{Conditions}
\begin{document}
\begin{abstract}
Let $H$ be a homology theory for algebraic varieties over a field $k$. To a complete $k$-variety $X$, one naturally attaches an ideal $\MH{X}$ of the coefficient ring $H(k)$. We show that, when $X$ is regular, this ideal depends only on the upper Chow motive of $X$. This generalises the classical results asserting that this ideal is a birational invariant of smooth varieties for particular choices of $H$, such as the Chow group. When $H$ is the Grothendieck group of coherent sheaves, we obtain a lower bound on the canonical dimension of varieties. When $H$ is the algebraic cobordism, we give a new proof of a theorem of Levine and Morel. Finally we discuss some splitting properties of geometrically unirational field extensions of small transcendence degree.
\end{abstract}
\author{Olivier Haution}
\title{Invariants of upper motives}
\email{olivier.haution at gmail.com}
\address{Mathematisches Institut, Ludwig-Maximilians-Universit\"at M\"unchen, Theresienstr.\ 39, D-80333 M\"unchen, Germany}

\subjclass[2010]{14C25}
\thanks{This work was financed by the EPSRC Responsive Mode grant EP/G032556/1.}
\keywords{upper motives, canonical dimension, Grothendieck group, algebraic cobordism}
\date{\today}

\maketitle

\section{Introduction}
The canonical dimension of a smooth complete algebraic variety measures to which extent it can be rationally compressed. In order to compute it, one usually studies the $p$-local version of this notion, called canonical $p$-dimension ($p$ is a prime number). In this paper, we consider the relation of $p$-equivalence \cite[\S3]{KM-standard} between complete varieties, constructed so that $p$-equivalent varieties have the same canonical $p$-dimension. This essentially corresponds to the relation of having the same upper motive with $\mathbb{F}_p$-coefficients (see \autoref{rem:upper}). For example two complete varieties $X$ and $Y$ are $p$-equivalent, for any $p$, as soon as there are rational maps $X \dasharrow Y$ and $Y \dasharrow X$. In order to obtain restrictions on the possible values of the canonical $p$-dimension of a variety, one is naturally led to study invariants of $p$-equivalence. We give a systematic way to produce such invariants (and in particular, birational invariants), starting from a homology theory. We provide examples related to $K$-theory and cycle modules. We then describe the relation between two such invariants of a complete variety $X$: its index $n_X$, and the integer $d_X$ defined as the g.c.d.\ of the Euler characteristics of the coherent sheaves of $\Oc_X$-modules. The latter invariant contains both arithmetic and geometric informations; this can be used to give bounds on the possible values of the index $n_X$ (an arithmetic invariant) in terms of the geometry of $X$. For instance, a smooth, complete, geometrically rational (or merely geometrically rationally connected, when $k$ has characteristic zero) variety of dimension $<p-1$ always has a closed point of degree prime to $p$. Some consequences of this statement are given in \autoref{cor:symbol}.

An immediate consequence of the Hirzebruch-Riemann-Roch theorem is that, for a smooth projective variety $X$ and a prime number $p$, we have
\[
\dim X\geq (p-1)( v_p(n_X) - v_p(\chi(X,\Oc_X)),
\]
(we denote by $v_p(m)$ the $p$-adic valuation of the integer $m$). When the characteristic of the base field is different from $p$, a result of Zainoulline states that $X$ is incompressible in case of equality (this follows by taking $p$-adic valuations in \cite[Corollary (A)]{Zai-09}). Here we improve this result, and obtain in \autoref{cor:cdim} the following bound on the canonical $p$-dimension of a regular complete variety $X$ over a field characteristic not $p$ (and also a weaker statement in characteristic $p$) 
\[
\cdim_p X \geq (p-1)(v_p(n_X) - v_p(d_X)).
\]
We always obtain some bound on the canonical $p$-dimension, even when it is not equal to the dimension. By contrast, the result of \cite{Zai-09}, or more generally any approach based on the degree formula (see \cite{Mer-St-03}), does not directly say anything about the canonical dimension of varieties which are not $p$-incompressible. One can sometimes circumvent this problem by exhibiting a smooth complete variety $Y$ of dimension $\cdim_p(X)$ which is $p$-equivalent to $X$, and use the degree formula to prove that $Y$ is $p$-incompressible (see \cite[\S7.3]{Mer-St-03} for the case of quadrics). But this requires to explicitly produce such a variety $Y$, and to find an appropriate characteristic number for $Y$.

Let us mention that the bound above tends to be sharp only when $\cdim_p(X)$ is not too large, compared with $p$ (see \autoref{ex:hypersurfaces}).\\

Another aspect of the technique presented here concerns its application to the algebraic cobordism $\Omega$ of Levine and Morel. Their construction uses two distinct results known in characteristic zero: \begin{enumerate}[label=({\alph*}), topsep=0pt, partopsep=0pt]\item \label{item:Hir} the resolution of singularities \cite{Hir-64}, and \item \label{item:WF} the weak factorisation theorem \cite{Weak-factorization,Weak-factorization2}.\end{enumerate} Some care is taken in the book \cite{LM-Al-07} to keep track of which result uses merely \ref{item:Hir}, or the combination of \ref{item:Hir} and \ref{item:WF}; most deeper results actually use both. In \cite{reduced}, we proved that the existence of a weak form of Steenrod operations (which may be considered as a consequence of the existence of algebraic cobordism) only uses \ref{item:Hir}. Moreover, we proved that it suffices, for the purpose of the construction of these operations, to have a $p$-local version of resolution of singularities, which has been recently obtained in characteristic different from $p$ by Gabber. By contrast, it is not clear what would be a $p$-local version of the weak factorisation theorem. In the present paper, we extend the list of results using only \ref{item:Hir} by proving the theorem below, whose original proof in \cite[Theorem~4.4.17]{LM-Al-07} was based on \ref{item:WF} (and \ref{item:Hir}). Our approach moreover allows us to give in \autoref{prop:MX} a $p$-local version of this statement, in terms of $p$-equivalence.
\begin{theorem*}
Assume that the base field $k$ admits resolution of singularities. The ideal $M(X)$ of $\Omega(k)$ generated by classes of smooth projective varieties $Y$, of dimension $<\dim X$, and admitting a morphism $Y\to X$, is a birational invariant of a smooth projective variety $X$.
\end{theorem*}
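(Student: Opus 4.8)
The plan is to realise $M(X)$ as the value, for the homology theory $H=\Omega$, of the general construction of the introduction, and then to feed this into the main invariance theorem. First I would record how the general ideal is computed for algebraic cobordism. By construction, the $\Omega(k)$-module $\Omega(X)$ is generated by the classes of projective morphisms $f\colon Y\to X$ with $Y$ smooth, and the pushforward along the structure morphism $X\to\Spec k$ sends the class of $f\colon Y\to X$ to $[Y]\in\Omega(k)$ (here $Y$ is itself projective, being projective over the projective variety $X$). Consequently the ideal of $\Omega(k)$ generated by the image of $\bigoplus_{n<\dim X}\Omega_n(X)$ under this pushforward is precisely the ideal $M(X)$ of the statement, so that $M(X)=\Omega_X(k)$ is the instance of the general invariant $\MH{X}$ at $H=\Omega$. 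It is exactly here — in the construction of $\Omega$, in its generation by smooth classes, and in the localisation sequence needed for the general machinery — that resolution of singularities \ref{item:Hir} enters, and \emph{only} this ingredient; no appeal to weak factorisation \ref{item:WF} is made.

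Since $X$ is smooth it is regular, so the main theorem applies and tells us that $\Omega_X(k)$ depends only on the upper motive of $X$. To exploit this I would recall the geometric input from the introduction: if $X$ and $X'$ are birational smooth projective varieties, then the birational map and its inverse furnish dominant rational maps $X\dasharrow X'$ and $X'\dasharrow X$, so that $X$ and $X'$ are $p$-equivalent for every prime $p$, and hence have isomorphic upper motives with $\Fp$-coefficients for every $p$. The $p$-local form of the invariance statement, namely \autoref{prop:MX}, then yields $M(X)\otimes\Zz_{(p)}=M(X')\otimes\Zz_{(p)}$ inside $\Omega(k)\otimes\Zz_{(p)}$, for every prime $p$.

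It remains to upgrade these $p$-local equalities to an integral one. The ring $\Omega(k)$ is the Lazard ring, and each graded piece $\Omega_n(k)$ is a finitely generated free abelian group; the homogeneous components $M(X)_n$ and $M(X')_n$ are then subgroups of $\Omega_n(k)$ whose localisations at every prime coincide. For a subgroup $A$ of a finitely generated free abelian group $L$ one has $A=\bigcap_p\,(A_{(p)}\cap L)$, so that a subgroup is recovered from the collection of its localisations; applying this to $L=\Omega_n(k)$ gives $M(X)_n=M(X')_n$ for all $n$, and summing over $n$ yields $M(X)=M(X')$.

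I expect the genuine obstacle to lie in the invariance theorem invoked in the second paragraph. The point is to show that a Chow correspondence realising an isomorphism of the upper parts of the motives of $X$ and $X'$ induces on cobordism a homomorphism carrying $\Omega_X(k)$ onto $\Omega_{X'}(k)$; the two delicate features are that the truncation $\dim Y<\dim X$ must be preserved, so that the \emph{upper} motive rather than the whole motive governs the ideal, and that this action has to be constructed using \ref{item:Hir} alone. This is precisely where the present approach departs from \cite[Theorem~4.4.17]{LM-Al-07}, whose proof instead passed through the weak factorisation theorem \ref{item:WF}.
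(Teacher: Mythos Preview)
Your argument has a genuine gap: the identification $M(X)=\M{\Omega}{X}$ is false. By definition $\M{\Omega}{X}$ is the image of the \emph{entire} pushforward $\Omega(X)\to\Omega(k)$, and this contains $[X]$ itself, coming from the identity morphism $X\to X$; on the other hand $M(X)$ lives in degrees strictly below $\dim X$. Your first paragraph correctly shows that $M(X)$ is the ideal generated by the pushforward of $\bigoplus_{n<\dim X}\Omega_n(X)$, but the clause ``so that $M(X)=\M{\Omega}{X}$'' does not follow: you have silently dropped the dimension restriction. As a result, invoking \autoref{prop:MX} only gives equality of $\M{\Omega}{X}\otimes\Zz_{(p)}$, not of $M(X)\otimes\Zz_{(p)}$, and the rest of the argument collapses. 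You even flag ``the truncation $\dim Y<\dim X$ must be preserved'' as the genuine obstacle in your last paragraph; it is, and it is precisely the step you skipped.

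The paper fixes this by choosing a finer homology theory: with $H=\Omega(-)_{(\dim X-1)}$ one obtains $\M{H}{X}=\M{\Omega}{X}^{(1)}$, the subgroup generated by classes $[Y]$ of smooth projective $Y$ admitting a \emph{non-dominant} morphism to $X$. One then checks that $M(X)$ is exactly the ideal of $\Omega(k)$ generated by the projection $\pi_{<\dim X}\bigl(\M{\Omega}{X}^{(1)}\bigr)$ onto degrees below $\dim X$ (this projection is well defined because $\Omega(k)$ is graded by dimension, and it kills the classes $[Y]$ with $\dim Y\geq\dim X$ but non-dominant image). Note also that your detour through $p$-localisation and the local--global reassembly is unnecessary: a birational map gives a correspondence of multiplicity exactly $1$, so \autoref{cor:birational} applied to $H=\Omega(-)_{(\dim X-1)}$ yields the integral equality $\M{\Omega}{X}^{(1)}=\M{\Omega}{X'}^{(1)}$ directly, and then $M(X)=M(X')$ follows by applying $\pi_{<\dim X}$ and taking generated ideals.
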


The structure of the paper is as follows. In \autoref{sect:pequivalence} we define the notion of $p$-equivalence, and describe its relation with canonical $p$-dimension. In \autoref{sect:theories} we introduce a (non-exhaustive) set of conditions on a pair of functors which are expected to be satisfied by a pair homology/cohomology of algebraic varieties. We verify these conditions for $K$-theory, cycles modules, and algebraic cobordism.  In \autoref{sect:subgroup} we explain how to construct an invariant of $p$-equivalence classes starting from any pair satisfying the conditions of \autoref{sect:theories}. We discuss each of the three situations mentioned above. In \autoref{sect:nxdx} we provide a relation between $n_X$ and $d_X$, which are invariants produced by the method of \autoref{sect:theories}. This yields the lower bound for canonical dimension. In \autoref{sect:examples}, we provide examples, and compute the lower bound in some specific situations. In \autoref{sect:ffields} we adopt the point of view of function fields, and consider splitting properties of geometrically unirational field extensions.

\section{$p$-equivalence}
\label{sect:pequivalence}
We denote by $k$ a fixed base field. A variety will be an integral, separated, finite type scheme over $\Spec k$. The function field of a variety $X$ will be denoted by $k(X)$. When $K$ is a field containing $k$, we also denote by $K$ its spectrum, and for a variety $X$ we write $X_K$ for $X \times_k K$. The letter $p$ will always denote a prime number.\\

A prime correspondence, or simply a \emph{correspondence}, $Y \cor X$ is a diagram of varieties and proper morphisms $Y \leftarrow Z \rightarrow X$, where the map $Y \to Z$ is generically finite. The degree of this map is the \emph{multiplicity} of the correspondence. 

Following \cite[\S3]{KM-standard}, we say that two varieties $X$ and $Y$ are \emph{$p$-equivalent} if there are correspondences $Y\cor X$ and $X \cor Y$ of multiplicities prime to $p$. It is equivalent to require that each of the schemes $X_{k(Y)}$ and $Y_{k(X)}$ have a closed point of degree prime to $p$.

Note that a rational map $Y \dasharrow X$ between complete varieties gives rise to a correspondence $Y \cor X$ of multiplicity $1$ (by taking for $Z$ the closure in $X \times_k Y$ of the graph of the rational map). Thus two complete varieties $X$ and $Y$ are $p$-equivalent, for all $p$, as soon as there are rational maps $Y \dasharrow X$ and $X \dasharrow Y$. Therefore we can use $p$-equivalence to find birational invariants (as in \autoref{cor:M}), but of course birational equivalence is a much finer relation (see for example \autoref{rem:birat}).

We will in general restrict our attention to complete regular varieties. In this case the relation of $p$-equivalence becomes transitive (this can be proved using \cite[Lemma~3.2]{KM-canp}). A consequence of a theorem of Gabber \cite[X, Theorem~2.1]{Gabber-book} is that, when the characteristic of $k$ is not $p$, any complete variety is $p$-equivalent to a projective regular variety (of the same dimension).\\

Let $X$ be a complete regular variety. Its canonical $p$-dimension $\cdim_p(X)$ can be defined as the least dimension of a closed subvariety $Z \subset X$ admitting a correspondence $X \cor Z$ of multiplicity prime to $p$ \cite[Corollary~4.12]{KM-canp}.

It is proven in \cite[Lemma~3.6]{KM-standard} that smooth $p$-equivalent varieties have the same canonical $p$-dimension. A slight modification of the arguments used there, together with Gabber's theorem, yields the following statement.
\begin{proposition}
Assume that the characteristic of $k$ is different from $p$. Let $X$ be a complete regular variety. Then $\cdim_p(X)$  is the least dimension of a complete regular variety $p$-equivalent to $X$.
\end{proposition}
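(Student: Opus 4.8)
The plan is to establish the two inequalities separately, writing $m$ for the least dimension of a complete regular variety that is $p$-equivalent to $X$. Throughout I would use the reformulation recalled in the text: a correspondence $Y \cor X$ of multiplicity prime to $p$ exists precisely when $X_{k(Y)}$ has a closed point of degree prime to $p$, so that all assertions about correspondences can be phrased in terms of finite field extensions of $k(X)$ and $k(Y)$ of degree prime to $p$.

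First I would treat the inequality $m \leq \cdim_p(X)$, the constructive direction. By the definition recalled above, choose a closed subvariety $Z \subseteq X$ with $\dim Z = \cdim_p(X)$ admitting a correspondence $X \cor Z$ of multiplicity prime to $p$; by minimality of $\dim Z$ the structural morphism of this correspondence onto $Z$ may be assumed dominant, so that the associated closed point of $Z_{k(X)}$ dominates $Z$. The inclusion $Z \hookrightarrow X$ furnishes a correspondence $Z \cor X$ of multiplicity one, whence $X$ and $Z$ are $p$-equivalent. Since the characteristic of $k$ is different from $p$, Gabber's theorem provides a projective regular variety $Y$ with $\dim Y = \dim Z = \cdim_p(X)$ that is $p$-equivalent to $Z$. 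It then remains to check that $Y$ is $p$-equivalent to $X$; as $Z$ need not be regular I would not appeal to transitivity, but verify the two required closed points directly. That $X_{k(Y)}$ has a closed point of degree prime to $p$ follows by base change from the corresponding point of $Z_{k(Y)}$ along $Z \hookrightarrow X$. For $Y_{k(X)}$, I would combine the dominant closed point of $Z_{k(X)}$ (which yields a finite extension $E/k(X)$ of degree prime to $p$ containing $k(Z)$) with a closed point of $Y_{k(Z)}$ of degree prime to $p$ (a finite extension $F/k(Z)$ of degree prime to $p$ with $Y(F)\neq\emptyset$): choosing a suitable residue field $L$ of $E \otimes_{k(Z)} F$ produces an extension $L/k(X)$ of degree prime to $p$ with $Y(L)\neq\emptyset$, the prime-to-$p$ bound coming from the multiplicativity of degrees in the tower and the fact that a sum of integers which is prime to $p$ has a summand prime to $p$. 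This gives $m \leq \dim Y = \cdim_p(X)$.

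For the reverse inequality $\cdim_p(X) \leq m$, it suffices to prove that every complete regular variety $Y$ that is $p$-equivalent to $X$ satisfies $\cdim_p(X) \leq \dim Y$. Since $\cdim_p(Y) \leq \dim Y$ holds trivially (take the subvariety $Y$ itself, using the diagonal point of $Y_{k(Y)}$), this would follow from the invariance $\cdim_p(X) \leq \cdim_p(Y)$ within the $p$-equivalence class of regular varieties, which is the announced modification of \cite[Lemma~3.6]{KM-standard}. To obtain it I would choose a closed subvariety $Z_Y \subseteq Y$ realizing $\cdim_p(Y)$, with a dominant correspondence $Y \cor Z_Y$ of multiplicity prime to $p$, and compose it with a correspondence $X \cor Y$ of multiplicity prime to $p$ coming from the $p$-equivalence, producing a correspondence $X \cor Z_Y$ of multiplicity prime to $p$ with $\dim Z_Y = \cdim_p(Y)$. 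The crucial step is then to replace $Z_Y$ by a closed subvariety $Z \subseteq X$ of dimension at most $\dim Z_Y$ still admitting a correspondence $X \cor Z$ of multiplicity prime to $p$; here the regularity of $X$ enters decisively, and Gabber's theorem is used to resolve the auxiliary varieties while keeping all degrees prime to $p$ (transitivity among regular varieties via \cite[Lemma~3.2]{KM-canp} being available for the bookkeeping). This compression yields $\cdim_p(X) \leq \dim Z_Y = \cdim_p(Y)$, and taking the minimum over $Y$ gives $\cdim_p(X) \leq m$.

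The main obstacle is precisely this final compression, and more broadly the passage between correspondences and closed subvarieties of $X$ itself. Canonical $p$-dimension is defined through closed subvarieties of $X$, whereas composing correspondences unavoidably introduces auxiliary varieties mapping to $X$ only generically finitely; extracting from such data a genuine closed subvariety of $X$ of controlled dimension is exactly where the regular structure of $X$ is needed, via its diagonal. Closely related is the need to ensure that the correspondences involved are dominant onto the relevant factors, which is automatic for the minimal correspondence defining $\cdim_p$ but must be arranged with care for the correspondences supplied by the $p$-equivalence. Finally, because $k$ may be imperfect one cannot pass freely to a smooth model, so the degree bookkeeping must be performed at the level of residue fields of tensor products such as $E \otimes_{k(Z)} F$; this, together with the prime-to-$p$ resolution furnished by Gabber's theorem in characteristic different from $p$, is what permits the smooth-case arguments of \cite[Lemma~3.6]{KM-standard} to be adapted to regular varieties.
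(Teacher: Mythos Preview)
Your proposal is correct and follows essentially the approach indicated by the paper, which does not give an explicit proof but merely points to a ``slight modification of the arguments'' of \cite[Lemma~3.6]{KM-standard} together with Gabber's theorem. You have in fact reconstructed the argument in more detail than the paper itself: the first inequality is handled explicitly and correctly (the tensor-product trick with residue fields is precisely what underlies \autoref{lemm:indep}), and for the second you correctly isolate the invariance $\cdim_p(X)\leq\cdim_p(Y)$ as the content of the cited lemma, identify the compression step as the place where regularity of $X$ and \cite[Lemma~3.2]{KM-canp} are used, and note where Gabber's prime-to-$p$ alterations replace smoothness in the original argument.
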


\begin{remark}[Upper motives]
\label{rem:upper}
Let $X$ be a complete smooth variety. A summand of the Chow motive of $X$ with $\Fp$-coefficients is called \emph{upper} if it is defined by a projector of multiplicity $1 \in \Fp$ \cite[Definition~2.10]{Kar-Upper}.

If two smooth complete varieties have a common upper summand with $\Fp$-coefficients, then they are $p$-equivalent. The converse is true if the varieties are geometrically split and satisfy Rost nilpotence \cite[Corollary~2.15]{Kar-Upper}.
\end{remark}

\section{Ring theories and modules}
\label{sect:theories}
We denote by $\Ab$ the category of abelian groups. Let $\Vv$ be a full subcategory of the category of varieties and proper morphisms, and $\Reg$ a full subcategory of $\Vv$, which unless otherwise specified (in \ref{sect:chow}) will consist of the regular varieties in $\Vv$. Let $R$ and $H$ be two (covariant) functors $\Vv \to \Ab$. For a morphism $f$ of $\Vv$, we denote by $f_*$ either of the two corresponding morphisms in $\Ab$. We consider the following conditions on $R$ and $H$. 

\begin{conditions}
\label{def:main}
Let $X \in \Vv$ and $S \in \Reg$. Let $f \colon X \to S$ be a proper, dominant, generically finite morphism of degree $d$.
\begin{enumerate}[label=(R\arabic{*})]
\item \label{def:ring} The group $R(S)$ has a structure of an associative ring with unit $1_S$.

\item \label{def:u} It is possible to find an element $u\in R(X)$ such that the element $f_*u - d \cdot 1_S$ is nilpotent in the ring $R(S)$.
\end{enumerate}

\begin{enumerate}[label=(H\arabic{*})]
\item \label{def:mmodule} There is a morphism of abelian groups
\[
R(X) \otimes H(S) \to H(X) \quad ;  \quad x \otimes s \mapsto x \cdot_f s.
\]

\item \label{def:mprojformula} The group $H(S)$ has a structure of a left $R(S)$-module such that 
\[
f_*(x \cdot_f s) = (f_*x) \cdot s.
\]
\end{enumerate}
\end{conditions}

We now describe some classical examples of such functors $R$ and $H$. In all cases, $R$ and $H$ will correspond to cohomology theories, and satisfy additional properties which are logically irrelevant here. We tried to be provide minimal conditions making the proof of \autoref{lemm:main} below work.
\subsection{Quillen $K$-theory}(See \cite[\S7]{Qui-72})
\label{sect:Ktheory}Let $\Vv$ be the category of varieties and proper morphisms, and let $X \in \Vv$. We let $R(X)$ be the Grothendieck group $\G(X)$ of the category of coherent sheaves of $\Oc_X$-modules, and $H(X)$ be the $m$-th $K$-group $\Gp_m(X)$ of this category. 

We denote by $\Kp_m(X)$ the $m$-th $K$-group of the category of locally free coherent sheaves on $X$. The tensor product induces a morphism
\begin{equation}
\label{eq:modulekg}
\Kp_m(X) \otimes \Gp_0(X) \to \Gp_{m}(X) \quad ; \quad x \otimes y \mapsto x \cap y.
\end{equation}
When $S$ is regular, the map $- \cap [\Oc_S]$ induces an isomorphism
\begin{equation}
\label{eq:regular}
\varphi_S \colon \Kp_m(S) \to \Gp_m(S).
\end{equation}
With $m=0$, the combination of \eqref{eq:regular} and \eqref{eq:modulekg} gives  \ref{def:ring} (here $1_S=[\Oc_S]$). 

When $f \colon X\to S$ is a morphism, we have a morphism $f^*\colon\Kp_m(S) \to \Kp_m(X)$, and we can define
\begin{equation*}
\Gp_0(X) \otimes \Gp_m(S) \to \Gp_m(X) \quad ; \quad x \otimes s \mapsto x \cdot_f s = f^*\circ \varphi_S^{-1}(s) \cap x,
\end{equation*}
proving \ref{def:mmodule}. Then \ref{def:mprojformula} follows from the projection formula.

We prove \ref{def:u} for $u=[\Oc_X]$. The element $x = f_*u - d\cdot [\Oc_S]$ belongs to the kernel of the restriction to the generic point morphism $\G(S) \to \G(k(S))$ (see e.g.\ \cite[Lemma~2.4]{euler}). This amounts to saying that its unique antecedent $y \in \K(S)$ under \eqref{eq:regular} (with $m=0$) has rank zero. Thus for any $n$, its $n$-th power $y^n$ belongs to the $n$-th term of the gamma filtration. The image by \eqref{eq:regular} (with $m=0$) of this term is contained in the $n$-th term of the topological filtration \cite[Expos\'e~X, Corollaire~1.3.3]{sga6}. The latter vanishes when $n>\dim S$, hence so does $x^n$. 

\subsection{Chow groups and cycle modules}
\label{sect:chow}
Let us sketch how Chow groups and cycle modules can be made to fit into this framework, although the situation is somewhat degenerate. Let $\Vv$ be the category of varieties and proper morphisms, and $R$ be the Chow group $\CH$. The property \ref{def:u} is satisfied with $u=[X]$, since $f_*[X]=d \cdot [S]$.

Let $M$ be a cycle module \cite[Definition~2.1]{Rost-Chow}. We let $H(-)=A_*(-;M)$ be the Chow group with coefficients in $M$ \cite[p.356]{Rost-Chow}.

When $M$ is Quillen $K$-theory, the conditions  \labelcref{def:ring}, \labelcref{def:mmodule,def:mprojformula,} are verified in \cite[\S 8]{Gil-Ri-81} using Bloch's formula.

Taking for $\Reg$ the subcategory of smooth varieties, \ref{def:ring} is classical. When $X \to S$ is a morphism, with $S$ a smooth variety, the pairing \labelcref{def:mmodule}
\[
\CH(X) \otimes A_*(S;M) \to A_*(X;M)
\]
is defined by sending $x \otimes s$ to $g^*(x \times_k s)$. Here we use the cross product of \cite[\S 14]{Rost-Chow}, and $g^*$ is the pull-back along the regular closed embedding $g \colon X \to X \times_k S$ given by the graph of $f$, defined as the composite
\[
A_*(X \times_k S;M) \xrightarrow{J(g)} A_*(N_g;M) \xrightarrow{(p^*)^{-1}} A_*(X;M),
\]
where $J(g)$ is the deformation homomorphism \cite[\S 11]{Rost-Chow} and $p^*$ the isomorphism induced by the flat pull-back along the normal bundle $p \colon N_g=\Tan_S \times_S X \to X$ to $g$ \cite[Proposition~8.6]{Rost-Chow}. Then \labelcref{def:mprojformula} is easily verified.

\subsection{Algebraic cobordism} \label{sect:omega} Assume that $k$ admits resolution of singularities. In this paper, this will mean that $k$ satisfies the conclusion of \cite[Theorem~A.1, p.233]{LM-Al-07}. Note that it implies that $k$ is perfect. Let $\Vv$ be the category of quasi-projective varieties and projective morphisms. Then $\Reg$ is the full subcategory of smooth quasi-projective varieties. We take for $R=H$ the algebraic cobordism $\Omega$ of \cite{LM-Al-07}. We prove properties \labelcref{def:ring}, \labelcref{def:mmodule}, \labelcref{def:mprojformula}, and, under the additional assumption that $f$ is separable, we prove \labelcref{def:u}.

Property \ref{def:ring} is a classical property of $\Omega$. We now prove \ref{def:mmodule}. We have by \cite[Lemma~2.4.15]{LM-Al-07}
\[
\Omega(X) = \colim{\mathcal{C}} \Omega(Y),
\]
where the colimit is taken over the category $\mathcal{C}$ of projective $X$-schemes $Y$ which are smooth $k$-varieties; if $g \colon Y\to Z$ is a morphism in $\mathcal{C}$, the transition map is the push-forward $g_* \colon \Omega(Y) \to \Omega(Z)$. Then for $Y \in \mathcal{C}$, we can make $\Omega(S)$ act on $\Omega(Y)$ using the pull-back along $Y\to S$ and the ring structure on $\Omega(Y)$. The map $g_*$ is easily seen to be $\Omega(S)$-linear using the projection formula. This gives an action on the colimit
\begin{equation}
\label{eq:actioncolim}
\Omega(X) \otimes \Omega(S) \to \Omega(X),
\end{equation}
proving \ref{def:mmodule}. Then property \ref{def:mprojformula} follows formally from the projection formula in the smooth case.

For any $T\in \Vv$, we consider the subgroup $\Omega(T)^{(n)}$ of $\Omega(T)$ generated by the images of $g_*$, where $g$ runs over the projective morphisms $W \to T$ whose image has codimension $\geq n$ in $T$. When $T=S$ is smooth, one checks, using reduction to the diagonal and the moving lemma \cite[Proposition~3.3.1]{LM-Al-07}, that the subgroups $\{\Omega(S)^{(n)}, n\geq 0\}$ define a ring filtration on $\Omega(S)$. Since  $\Omega(S)^{(\dim X +1)}=0$, any element of $\Omega(S)^{(1)}$ is nilpotent. Let $u$ be the class in $\Omega(X)$ of any resolution of singularities of $X$. Since $f$ separable, the element $f_*u - d\cdot 1_S$ vanishes when restricted to some non-empty open subvariety of $S$ by \cite[Lemma~4.4.5]{LM-Al-07}. By the localisation sequence \cite[Theorem~3.2.7]{LM-Al-07}, this means that this element belongs to $\Omega(S)^{(1)}$, proving \ref{def:u}.\\

Let us mention that the pair $(R,H)=(\Omega(-),\Omega(-)_{(m)})$ satisfies \autoref{def:main}, with $f$ separable in \labelcref{def:u} (where $\Omega(T)_{(m)}=\Omega(T)^{(\dim T - m)}$). Indeed the main point is to see that the map \eqref{eq:actioncolim} descends to a map
\[
\Omega(X) \otimes \Omega^{(n)}(S) \to \Omega^{(n)}(X).
\]
This can be seen using the fact that pull-backs along morphisms of smooth varieties respect the filtration by codimension of supports, a consequence of the moving lemma mentioned above. When $n=1$, this can also be proved directly, using the fact that $f$ is dominant and the localisation sequence.

\section{The subgroup $\M{H}{X}$}
\label{sect:subgroup}
In this section, $(R, H)$ will be a pair of functors $\Vv \to \Ab$ satisfying \autoref{def:main}. We assume that the base $k$ belongs to $\Vv$, and therefore to $\Reg$. If $X \in \Vv$ is complete, its structural morphism $x \colon X \to k$ is then in $\Vv$. We consider the subgroup
\[
\M{H}{X}=\im\big( x_* \colon H(X) \to H(k) \big) \subset H(k).
\]
In all examples considered in this paper, $\M{H}{X}$ is actually an $R(k)$-submodule of $H(k)$.

\begin{lemma}
\label{lemm:main}
Let $S \in \Reg$ and $X \in \Vv$. Let $f \colon X \to S$ be a proper, dominant, generically finite morphism of degree $d$. Then the map 
\[
f_* \colon H(X)\otimes \Zz[1/d] \to H(S)\otimes \Zz[1/d]
\]
is surjective.
\end{lemma}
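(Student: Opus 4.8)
The plan is to exploit the nilpotent defect supplied by \ref{def:u}, which becomes invertible once $d$ is inverted, and then to split $f_*$ by means of the projection formula \ref{def:mprojformula}. The computation is essentially formal; the only genuine input is the nilpotence in \ref{def:u}.

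First I would use \ref{def:u} to choose $u \in R(X)$ such that $n := f_*u - d\cdot 1_S$ is nilpotent in the ring $R(S)$ (which is a ring by \ref{def:ring}). After inverting $d$, the element $f_*u = d\cdot 1_S + n$ becomes a unit of $R(S)\otimes\Zz[1/d]$: writing $f_*u = d(1_S + d^{-1}n)$, the element $d^{-1}n$ is nilpotent, so $1_S + d^{-1}n$ admits a two-sided inverse given by its finite geometric series, while $d$ is invertible by construction. Let $v \in R(S)\otimes\Zz[1/d]$ denote the resulting two-sided inverse of $f_*u$, so that $(f_*u)\,v = 1_S$.

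Next I would extend the pairing of \ref{def:mmodule} and the module structure of \ref{def:mprojformula} by $\Zz[1/d]$-linearity. Thus $\cdot_f$ yields a map $R(X)\otimes\big(H(S)\otimes\Zz[1/d]\big)\to H(X)\otimes\Zz[1/d]$, the group $H(S)\otimes\Zz[1/d]$ becomes a left $R(S)\otimes\Zz[1/d]$-module, and the identity $f_*(x\cdot_f s)=(f_*x)\cdot s$ persists. Given an arbitrary $s \in H(S)\otimes\Zz[1/d]$, I would then form $u\cdot_f (v\cdot s) \in H(X)\otimes\Zz[1/d]$ and compute, using \ref{def:mprojformula} together with associativity of the module action,
\[
f_*\big(u\cdot_f (v\cdot s)\big) = (f_*u)\cdot(v\cdot s) = \big((f_*u)\,v\big)\cdot s = 1_S\cdot s = s.
\]
This exhibits $s$ as lying in the image of $f_*$, and since $s$ was arbitrary, $f_*$ is surjective after inverting $d$.

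I do not expect any serious obstacle here: the content is concentrated in the remark that the defect $f_*u - d\cdot 1_S$, which \ref{def:u} guarantees to be nilpotent, is exactly what renders $f_*u$ invertible once $d$ is inverted. The only point demanding a little care is the compatibility of the module structures with the extension of scalars to $\Zz[1/d]$, so that the projection formula still holds and $v$ genuinely acts as an inverse of $f_*u$; no delicate geometric input is required.
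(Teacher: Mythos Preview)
Your proof is correct and follows essentially the same approach as the paper's: choose $u$ via \ref{def:u} so that $f_*u$ becomes invertible in $R(S)\otimes\Zz[1/d]$, and then use the projection formula \ref{def:mprojformula} to verify that $u\cdot_f\big((f_*u)^{-1}\cdot s\big)$ is a preimage of $s$. The paper's argument is simply a terser version of what you wrote, omitting the explicit geometric-series justification and the remarks on extending scalars.
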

\begin{proof}
Using \labelcref{def:u}, choose $u \in R(X)$ such that $f_*u-d \cdot 1_S$ is nilpotent. The element $f_*u$ is then invertible in the ring $R(S) \otimes \Zz[1/d]$. The lemma follows, since we have by \ref{def:mprojformula}, for any $s\in H(S)$,
\[
f_* \left( u \cdot_f\left( (f_*u)^{-1} \cdot s\right) \right)=s.\qedhere
\]
\end{proof}

We denote by $\Zz_{(p)}$ the subgroup of $\Qq$ consisting of those fractions whose denominator is prime to $p$.
\begin{proposition}
\label{prop:multiplicity}
Let $X,Y$ be complete varieties, with $X \in \Vv$ and $Y \in \Reg$. Let $Y \cor X$ be a correspondence of multiplicity prime to $p$ (resp.\ let $Y\dasharrow X$ be a rational map). Then, as subgroups of $H(k)\otimes \Zz_{(p)}$ (resp.\ $H(k)$),
\[
\M{H}{Y} \otimes \Zz_{(p)} \subset \M{H}{X}\otimes \Zz_{(p)} \text{ (resp.\ } \M{H}{Y} \subset \M{H}{X}).
\]
\end{proposition}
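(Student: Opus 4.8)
The plan is to reduce the statement about the correspondence $Y \cor X$ to two applications of \autoref{lemm:main}, connected by a functoriality argument. Recall that a correspondence $Y \cor X$ of multiplicity prime to $p$ is a diagram $Y \leftarrow Z \rightarrow X$ of proper morphisms, say $g \colon Z \to Y$ and $h \colon Z \to X$, where $g$ is generically finite of degree prime to $p$. The goal is to compare $\M{H}{Y}$ and $\M{H}{X}$ as subgroups of $H(k) \otimes \Zz_{(p)}$, so the natural strategy is to factor each through $\M{H}{Z}$.

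First I would address the inclusion $\M{H}{Y} \otimes \Zz_{(p)} \subset \M{H}{Z}\otimes \Zz_{(p)}$. Here the issue is that $g \colon Z \to Y$ is a proper, dominant, generically finite morphism of degree $d$ prime to $p$, and $Y \in \Reg$, so \autoref{lemm:main} applies (with $S = Y$, $X = Z$) to give that $g_* \colon H(Z) \otimes \Zz[1/d] \to H(Y) \otimes \Zz[1/d]$ is surjective. Since $d$ is prime to $p$, we have $\Zz_{(p)} \subset \Zz[1/d] \otimes \Zz_{(p)}$ in the relevant sense, so after tensoring with $\Zz_{(p)}$ the map $g_*$ is still surjective. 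Chasing the commuting square coming from the structural morphisms $y \colon Y \to k$ and $z = y \circ g \colon Z \to k$, namely $y_* \circ g_* = z_*$, surjectivity of $g_*$ yields $\im(y_*) \otimes \Zz_{(p)} = \im(z_*) \otimes \Zz_{(p)}$, i.e.\ $\M{H}{Y}\otimes\Zz_{(p)} = \M{H}{Z}\otimes\Zz_{(p)}$. (In fact this gives equality, not just inclusion, on the $Z$--$Y$ side.)

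Next, the inclusion $\M{H}{Z} \otimes \Zz_{(p)} \subset \M{H}{X}\otimes \Zz_{(p)}$ is the easy, purely functorial half. The morphism $h \colon Z \to X$ is proper, and its structural morphism factors as $z = x \circ h$ where $x \colon X \to k$. Hence $z_* = x_* \circ h_*$, so $\im(z_*) \subset \im(x_*)$, that is $\M{H}{Z} \subset \M{H}{X}$, and this survives tensoring with $\Zz_{(p)}$. Combining the two steps gives $\M{H}{Y}\otimes\Zz_{(p)} \subset \M{H}{X}\otimes\Zz_{(p)}$, as required. For the rational-map case, I would invoke the remark from \autoref{sect:pequivalence}: a rational map $Y \dasharrow X$ between complete varieties produces a correspondence $Y \cor X$ of multiplicity exactly $1$, taking $Z$ to be the closure of the graph. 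Then $d=1$, so no denominators are introduced and the same argument runs integrally, yielding $\M{H}{Y} \subset \M{H}{X}$ in $H(k)$ without inverting anything.

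\textbf{Main obstacle.} The genuinely delicate point is the first step, and specifically the hypothesis bookkeeping: \autoref{lemm:main} requires the target $S$ (here $Y$) to lie in $\Reg$, which is exactly why the proposition assumes $Y \in \Reg$ while only demanding $X \in \Vv$. One must also be careful that $Z$, the apex of the correspondence, lies in $\Vv$ so that $H(Z)$ is defined and the push-forwards make sense; for a general correspondence this needs the defining diagram to consist of morphisms in $\Vv$. The remaining subtlety is the elementary but easy-to-botch localisation argument identifying $\Zz[1/d]\otimes\Zz_{(p)}$-surjectivity with $\Zz_{(p)}$-surjectivity when $\gcd(d,p)=1$; everything else is a diagram chase through $y_* \circ g_* = z_*$ and $x_* \circ h_* = z_*$.
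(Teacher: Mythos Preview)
Your proposal is correct and follows essentially the same approach as the paper's proof: apply \autoref{lemm:main} to the generically finite map $g\colon Z\to Y$ to obtain $\M{H}{Y}\otimes\Zz_{(p)}\subset\M{H}{Z}\otimes\Zz_{(p)}$, then use functoriality of push-forward along $h\colon Z\to X$ to get $\M{H}{Z}\subset\M{H}{X}$, with the rational-map case handled by taking the multiplicity-$1$ correspondence from the graph closure. Your write-up is more detailed on the localisation bookkeeping and the hypothesis $Y\in\Reg$, but the structure and key inputs are identical.
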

\begin{proof}
Let $Y \leftarrow Z \to X$ be a diagram giving the correspondence (resp.\ the correspondence of multiplicity $1$ associated with the rational map). By \autoref{lemm:main}, we have
\[
\M{H}{Y} \otimes \Zz_{(p)} \subset \M{H}{Z}\otimes \Zz_{(p)} \text{ (resp.\ } \M{H}{Y} \subset \M{H}{Z}).
\]
Since there is a proper morphism $Z \to X$, we have
\[
\M{H}{Z} \subset \M{H}{X}.\qedhere
\]
\end{proof}

\begin{corollary}
\label{cor:main}
Assume that two complete varieties $Y, X \in \Reg$ are $p$-equivalent. Then, as subgroups of $H(k)\otimes \Zz_{(p)}$, we have
\[
\M{H}{Y} \otimes \Zz_{(p)} = \M{H}{X} \otimes \Zz_{(p)}.
\]
\end{corollary}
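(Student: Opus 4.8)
The plan is to deduce the statement directly from \autoref{prop:multiplicity} by a symmetric argument, so that the corollary becomes a formal consequence of the inclusion already established there. First I would unpack the hypothesis of $p$-equivalence: by the definition recalled in \autoref{sect:pequivalence}, it provides correspondences $Y \cor X$ and $X \cor Y$, each of multiplicity prime to $p$. The point that makes the symmetry available is that, since both $X$ and $Y$ lie in $\Reg$, and $\Reg$ is a full subcategory of $\Vv$, each of the two varieties lies simultaneously in $\Reg$ and in $\Vv$. Hence either variety is allowed to play the role of the member of $\Reg$ or of the member of $\Vv$ required in the hypotheses of the proposition.

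Next I would apply \autoref{prop:multiplicity} twice. Applying it to the correspondence $Y \cor X$, with $Y \in \Reg$ and $X \in \Vv$, yields the inclusion $\M{H}{Y} \otimes \Zz_{(p)} \subset \M{H}{X} \otimes \Zz_{(p)}$. Applying the same proposition to the correspondence $X \cor Y$, this time with $X \in \Reg$ and $Y \in \Vv$, yields the reverse inclusion $\M{H}{X} \otimes \Zz_{(p)} \subset \M{H}{Y} \otimes \Zz_{(p)}$. Combining the two inclusions gives the desired equality of subgroups of $H(k) \otimes \Zz_{(p)}$.

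I do not expect any genuine obstacle here: the entire substance of the argument is already carried by \autoref{lemm:main} and \autoref{prop:multiplicity}, and the only thing to check is that the hypotheses of the proposition are satisfied in both directions, which is immediate from the assumption $X, Y \in \Reg$. It is worth emphasising that the passage to $\Zz_{(p)}$-coefficients is precisely what makes both opposite inclusions simultaneously available, since it is there that the multiplicities prime to $p$ of the two correspondences become invertible, as exploited in the proof of \autoref{lemm:main}.
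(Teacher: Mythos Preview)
Your argument is correct and is exactly the one the paper has in mind: the corollary is stated without proof precisely because it follows immediately from \autoref{prop:multiplicity} applied symmetrically, using that both $X$ and $Y$ lie in $\Reg$ (and hence in $\Vv$). There is nothing to add.
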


\begin{corollary}
\label{cor:birational}
Let $Y, X \in \Reg$ be two complete varieties. Assume that there are rational maps $Y \dasharrow X$ and $X \dasharrow Y$. Then $\M{H}{Y}=\M{H}{X}$ as subgroups of $H(k)$.
\end{corollary}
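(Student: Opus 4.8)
The plan is to deduce this directly from \autoref{prop:multiplicity}, applying it once in each direction. First I would note that both $X$ and $Y$ lie in $\Reg$, hence also in $\Vv$, so that each of the two varieties can simultaneously play the role of the source of a rational map (which \autoref{prop:multiplicity} requires to be regular) and of the target (which it only requires to lie in $\Vv$). This dual membership is what makes the symmetric argument go through.

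Using the rational map $Y \dasharrow X$, with $Y \in \Reg$ and $X \in \Vv$, the ``resp.'' part of \autoref{prop:multiplicity} gives the inclusion $\M{H}{Y} \subset \M{H}{X}$ as subgroups of $H(k)$; no prime needs to be inverted here, precisely because a rational map between complete varieties produces a correspondence of multiplicity $1$. Symmetrically, applying the proposition to $X \dasharrow Y$, now with $X \in \Reg$ and $Y \in \Vv$, yields $\M{H}{X} \subset \M{H}{Y}$. Combining the two inclusions gives the desired equality $\M{H}{Y}=\M{H}{X}$.

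I expect essentially no obstacle: the entire content has been absorbed into \autoref{prop:multiplicity} (and ultimately into \autoref{lemm:main}), so what remains is a two-line bookkeeping argument. The only point deserving a moment's attention is that the regularity hypothesis in \autoref{prop:multiplicity} is imposed solely on the \emph{source} of the rational map, which is exactly why both inclusions are legitimate once we know $X, Y \in \Reg$. It is also worth emphasising why we invoke the rational-map version rather than the $p$-equivalence version: the latter, via \autoref{cor:main}, would only give equality after tensoring with $\Zz_{(p)}$, whereas a genuine birational equivalence lets us conclude integrally in $H(k)$.
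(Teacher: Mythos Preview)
Your proposal is correct and is exactly the argument the paper has in mind: the corollary is stated without proof precisely because it follows by applying the ``resp.'' part of \autoref{prop:multiplicity} once in each direction, using that both $X$ and $Y$ lie in $\Reg$. Your remarks about why the rational-map version is needed (to avoid tensoring with $\Zz_{(p)}$) and about where regularity is used are accurate.
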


We now come back to the examples of theories given in \autoref{sect:theories}.

\subsection{Chow groups}
We have $\CH(k)=\Zz$, and for a complete variety $X$,
\[
\M{\CH}{X}=n_X \Zz,
\]
where $n_X$ is the index of $X$, defined as the g.c.d.\ of the degrees of closed points of $X$. We denote the $p$-adic valuation of $n_X$ by 
\[
\nep{X}=v_p(n_X).
\]
We will need the following slightly more precise version of \autoref{prop:multiplicity}, when $R=H=\CH$.
\begin{proposition}
\label{prop:multiplicitychow}
Let $X$ and $Y$ be complete varieties, with $Y$ regular. Let $Y \cor X$ be a correspondence of multiplicity $m$. Then
\[
n_X \mid m \cdot n_Y.
\]
\end{proposition}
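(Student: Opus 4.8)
The plan is to refine the argument of \autoref{prop:multiplicity}, replacing the use of \autoref{lemm:main} (which yields information only after inverting $m$) by the explicit construction of a zero-cycle on $X$ of the correct degree, so that the conclusion comes out integrally. Write the correspondence as a diagram of proper morphisms $Y \xleftarrow{g} Z \xrightarrow{h} X$, where $g$ is dominant and generically finite of degree $m$. Since $Y$ is regular, the morphism $g$ admits a pull-back $g^*\colon \CH(Y) \to \CH(Z)$, constructed via the graph $Z \to Z \times_k Y$ exactly as the pairing of \autoref{sect:chow} (this is the point where regularity of $Y$ is used, making the graph a regular closed embedding and providing a refined Gysin map).

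First I would choose a zero-cycle class $\alpha \in \CH(Y)$ whose image under the degree map is $n_Y$; such an $\alpha$ exists because $\M{\CH}{Y} = \im\big(y_* \colon \CH(Y) \to \CH(k)=\Zz\big) = n_Y\Zz$, where $y \colon Y \to k$ denotes the structural morphism. The key identity is the projection formula combined with $g_*[Z] = m[Y]$ (which holds since $g$ is dominant and generically finite of degree $m$): these give $g_*(g^*\alpha) = m\alpha$ in $\CH(Y)$, i.e.\ $g_* g^* = m \cdot \id$ on zero-cycles.

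Next I would push the class $g^*\alpha$ forward to the point in two different ways, using that the structural morphism $z \colon Z \to k$ factors as $z = y \circ g = x \circ h$, where $x \colon X \to k$ is the structural morphism of $X$. On one hand, $z_*(g^*\alpha) = y_*\big(g_* g^*\alpha\big) = y_*(m\alpha) = m\,n_Y$. On the other hand, $z_*(g^*\alpha) = x_*\big(h_* g^*\alpha\big)$ lies in $\im(x_*) = \M{\CH}{X} = n_X\Zz$. Comparing the two expressions gives $m\,n_Y \in n_X\Zz$, which is precisely the assertion $n_X \mid m\,n_Y$.

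The only delicate point is the availability of the pull-back $g^*$ together with the projection formula $g_*g^*\alpha = m\alpha$; both rest on the regularity of $Y$, and once $g^*$ is in hand the rest of the computation is formal. It is worth emphasising that the integrality of the statement — with no denominators, unlike the localised conclusion of \autoref{lemm:main} — is exactly what the explicit zero-cycle $h_* g^*\alpha$ on $X$ provides.
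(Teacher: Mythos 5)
Your proof is correct and is essentially the paper's own argument: the paper also writes the correspondence as $Y \xleftarrow{f} Z \xrightarrow{g} X$, picks $y \in \CH_0(Y)$ of degree $n_Y$, forms the class $[Z]\cdot_f y$ (which is exactly your $g^*\alpha$, since the pairing of \autoref{sect:chow} with $x=[Z]$ is the graph pullback), and compares the two pushforwards to the point via the projection formula and $f_*[Z]=m[Y]$. The only difference is notational — you package the module structure as a refined Gysin map $g^*$ rather than as the pairing $\cdot_f$.
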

\begin{proof}
Let $Y \xleftarrow{f} Z \xrightarrow{g} X$ be a diagram giving the correspondence. Let $y \in \CH_0(Y)$ be such that $\deg (y)=n_Y$. Using \ref{sect:chow}, we have
\[
n_X \mid \deg \circ g_*([Z] \cdot_f y) = \deg \circ f_*([Z] \cdot_f y) = \deg ((f_*[Z]) \cdot y)= \deg ( m \cdot y)=m \cdot n_Y.\qedhere
\]
\end{proof}

\subsection{Cycle modules}
Let $M$ be a cycle module, $H(-)=A_*(-;M)$, $R=\CH$ (see \S \ref{sect:chow}). We have $A_*(k;M)=M(k)$. For a complete variety $X$, we claim that the subgroup $\M{H}{X}$ of $M(k)$ is the image of the morphism
\begin{equation}
\label{eq:cyclemodule}
\bigoplus_{L/k \in \mathcal{F}_X} M(L) \to M(k),
\end{equation}
where $\mathcal{F}_X$ is the class of finite field extensions $L/k$ such that $X(L) \neq \emptyset$. Indeed the image of $A_n(X;M) \to A_n(k;M)$ vanishes when $n>0$. The group $A_0(X;M)$ is by definition a quotient of the direct sum of the groups $M(k(x))$, over all closed points $x$ of $X$. Moreover, for each such $x$, the extension $k(x)/k$ belongs to $\mathcal{F}_X$, and the map $M(k(x)) \to A_0(X;M) \to A_0(k;M)=M(k)$ is the transfer for the finite field extension $k(x)/k$. Conversely, if $L/k \in \mathcal{F}_X$, then there is a closed point $x$ of $X$ such that $k(x)/k$ is a subextension of $L/k$. Thus $M(L) \to M(k)$ factors through $M(k(x)) \to M(k)$. This proves the claim, and additionally shows that $\mathcal{F}_X$ may be replaced in \eqref{eq:cyclemodule} by the set of residue fields at closed points of $X$.

\autoref{cor:main} asserts that, when $X$ is smooth (or merely regular when $M$ is Quillen $K$-theory), the subgroup $\M{H}{X} \otimes \Zz_{(p)}$ of $M(k) \otimes \Zz_{(p)}$ only depends on the $p$-equivalence class of $X$.

\begin{remark}
\label{rem:birat}
The group $A_0(X;M)$ itself is known to be a birational invariant of a complete smooth variety $X$ (see \cite[Corollary~12.10]{Rost-Chow} and \cite[Appendix RC]{KM-standard}).
\end{remark}

\subsection{Grothendieck group}
We have $\G(k)=\Zz$, and when $X$ is a complete variety,
\[
\M{(\G)}{X}=\de{X} \Zz,
\]
for a uniquely determined positive integer $\de{X}$. The integer $\de{X}$ is the g.c.d.\ of the integers 
\[
\chi(X,\Gc)=\sum_i (-1)^i \dim_k H^i(X,\Gc),
\]
where $\Gc$ runs over the coherent sheaves of $\Oc_X$-modules. We denote the $p$-adic valuation of $\de{X}$ by 
\[
\dep{X}=v_p(\de{X}).
\]
Let us record for later reference a consequence of \autoref{prop:multiplicity}.
\begin{proposition}
\label{prop:cor}
Let $X$ and $Y$ be complete varieties, with $Y$ regular. Let $Y \cor X$ be a correspondence of multiplicity prime to $p$. Then
\[
\dep{X} \leq \dep{Y}.
\]
\end{proposition}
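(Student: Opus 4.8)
The plan is to specialise \autoref{prop:multiplicity} to the Grothendieck-group theory $R=H=\G$ (the case $m=0$ of \autoref{sect:Ktheory}), and then to translate the resulting inclusion of subgroups of $\Zz_{(p)}$ into the asserted inequality of $p$-adic valuations. For this theory one has $H(k)=\G(k)=\Zz$, and, as recalled just above, $\M{(\G)}{X}=\de{X}\Zz$ and $\M{(\G)}{Y}=\de{Y}\Zz$.

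First I would check that the hypotheses of \autoref{prop:multiplicity} are met: $Y$ is regular (so $Y\in\Reg$), $X$ is complete, and $Y\cor X$ is a correspondence of multiplicity prime to $p$. That proposition then yields the inclusion $\de{Y}\Zz\otimes\Zz_{(p)}\subset\de{X}\Zz\otimes\Zz_{(p)}$ of subgroups of $\Zz\otimes\Zz_{(p)}=\Zz_{(p)}$.

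The remaining step is purely arithmetic. I would observe that in the local ring $\Zz_{(p)}$ the prime-to-$p$ part of any nonzero integer $m$ is a unit, so that the subgroup it generates is $p^{v_p(m)}\Zz_{(p)}$, and that these ideals are totally ordered, with $p^{a}\Zz_{(p)}\subset p^{b}\Zz_{(p)}$ precisely when $a\geq b$. The inclusion above thus reads $p^{\dep{Y}}\Zz_{(p)}\subset p^{\dep{X}}\Zz_{(p)}$, which gives exactly $\dep{X}\leq\dep{Y}$.

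I do not expect any genuine obstacle, as the statement is explicitly presented as a consequence of \autoref{prop:multiplicity}; the only point demanding a moment's attention is the direction of the inequality—the \emph{smaller} group $\M{(\G)}{Y}\otimes\Zz_{(p)}$ corresponds to the \emph{more divisible} generator and hence to the \emph{larger} valuation, so the inclusion coming from $Y\cor X$ correctly produces $\dep{X}\leq\dep{Y}$ rather than its reverse.
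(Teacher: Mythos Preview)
Your proposal is correct and matches the paper's approach exactly: the paper presents this proposition as an immediate consequence of \autoref{prop:multiplicity} applied with $R=H=\G$, and your translation of the inclusion $d_Y\Zz_{(p)}\subset d_X\Zz_{(p)}$ into the inequality $\dep{X}\leq\dep{Y}$ is the intended (and only) remaining step.
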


\subsection{Algebraic cobordism}
We say that two varieties $X$ and $Y$ are \emph{separably $p$-equivalent} if there are diagrams of projective morphisms $Y \xleftarrow{f} Z \to X$ and $X \xleftarrow{g} Z' \to Y$ such that $f$ and $g$ are both separable and generically finite of degrees prime to $p$. For a projective variety $X$, and an integer $n \geq 0$, we write $\M{\Omega}{X}^{(n)}$ for the subgroup of $\Omega(k)$ generated by classes of smooth projective varieties $Y$ admitting a morphism $Y\to X$ with image of codimension $\geq n$. In particular $\M{\Omega}{X}^{(0)}=\M{\Omega}{X}$.

\begin{proposition}
\label{prop:MX}
Assume that $k$ admits resolution of singularities. Let $Y$ and $X$ be smooth projective varieties of the same dimension, which are separably $p$-equivalent. Then, as subgroups of $\Omega(k) \otimes \Zz_{(p)}$, for $n\geq 0$,
\[
\M{\Omega}{X}^{(n)} \otimes \Zz_{(p)} = \M{\Omega}{Y}^{(n)} \otimes \Zz_{(p)} .
\]
\end{proposition}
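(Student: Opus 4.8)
The plan is to obtain this from the general formalism of \autoref{sect:subgroup}, applied to the filtered pair $(R,H)=(\Omega(-),\Omega(-)_{(m)})$ rather than to $(\Omega,\Omega)$. This pair satisfies \autoref{def:main}, with $f$ separable in \ref{def:u}, as recorded in \S\ref{sect:omega}; the separability requirement is exactly why the hypothesis is separable $p$-equivalence rather than plain $p$-equivalence.

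Write $D=\dim X=\dim Y$. For $n>D$ both sides vanish (no smooth projective variety maps to $X$ with image of codimension $>D$), so I would assume $0\le n\le D$ and set $m=D-n$. With $H=\Omega(-)_{(m)}$ one has $H(X)=\Omega(X)^{(n)}$, and since $\dim k=0\le m$ also $H(k)=\Omega(k)^{(-m)}=\Omega(k)$. The first task is to identify
\[
\M{H}{X}=\im\big(x_*\colon \Omega(X)^{(n)}\to\Omega(k)\big)=\M{\Omega}{X}^{(n)},
\]
and likewise for $Y$. The second equality is a comparison of generators: $\Omega(X)^{(n)}$ is generated by the $g_*\beta$ with $g\colon W\to X$ of image-codimension $\ge n$ and $\beta\in\Omega(W)$, and pushing the generators $[V\to W]$ of $\Omega(W)$ to the point produces precisely the classes $[V]$ of smooth projective $V$ admitting a morphism to $X$ with image of codimension $\ge n$.

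I would then rerun the proof of \autoref{prop:multiplicity} in both directions. Denote the two diagrams of the separable $p$-equivalence by $Y\xleftarrow{f}Z\to X$ and $X\xleftarrow{g}Z'\to Y$, with $f,g$ separable and generically finite of degree prime to $p$; being generically finite and dominant they have $\dim Z=\dim Z'=D$, so the same value $m=D-n$ yields $\M{H}{Z}=\M{\Omega}{Z}^{(n)}$ and $\M{H}{Z'}=\M{\Omega}{Z'}^{(n)}$. Applying \autoref{lemm:main} to $f\colon Z\to Y$ is legitimate ($Y\in\Reg$, $Z\in\Vv$, and \ref{def:u} holds since $f$ is separable), giving $\M{H}{Y}\otimes\Zz_{(p)}\subset\M{H}{Z}\otimes\Zz_{(p)}$; the proper morphism $Z\to X$ then gives $\M{H}{Z}\subset\M{H}{X}$ by functoriality of $H$. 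Hence $\M{\Omega}{Y}^{(n)}\otimes\Zz_{(p)}\subset\M{\Omega}{X}^{(n)}\otimes\Zz_{(p)}$, and the symmetric use of the second diagram gives the reverse inclusion.

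The delicate points are bookkeeping rather than conceptual. The key is that a single value $m=D-n$ simultaneously realises every filtered subgroup in play — which is exactly what the hypothesis $\dim X=\dim Y$ (and the dominance forcing $\dim Z=\dim Z'=D$) guarantees — and that the dimension-indexed theory $\Omega(-)_{(m)}$ is genuinely functorial for the proper morphisms $Z\to X$ and $Z'\to Y$, so that push-forward respects the filtration. I expect the identification $\M{H}{-}=\M{\Omega}{-}^{(n)}$ and the invocation of \ref{def:u} via separability to be the two steps needing the most care; everything else is the formalism of \autoref{sect:subgroup} read off for this particular pair.
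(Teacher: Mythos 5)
Your proposal is correct and takes essentially the same route as the paper: the paper's proof is exactly the one-line application of \autoref{cor:main} (in its analogue for separably $p$-equivalent varieties) with $R=\Omega$ and $H=\Omega(-)_{(d-n)}$, where $d$ is the common dimension. The details you spell out --- the identification $\M{H}{X}=\M{\Omega}{X}^{(n)}$ and the functoriality of the dimension-indexed filtration under projective push-forward --- are precisely the verifications left implicit behind that citation.
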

\begin{proof}
Let $d$ be the common dimension. The proposition follows by applying \autoref{cor:main} (more precisely the analog statement for separably $p$-equivalent varieties) with $R=\Omega$ and $H=\Omega(-)_{(d-n)}$ (defined at the end of \S \ref{sect:omega}).
\end{proof}

For a projective variety $X$, let $M(X) \subset \Omega(k)$ be the ideal generated by the classes of smooth projective varieties $Y$ of dimension $<\dim X$ admitting a morphism $Y \to X$. We have $M(X) \subset \M{\Omega}{X}^{(1)}$; when $k$ admits weak factorisation, this inclusion is an equality \cite[Theorem~4.4.16]{LM-Al-07}. 

\begin{proposition}
Assume that $k$ admits resolution of singularities. Let $Y$ and $X$ be two smooth projective varieties of the same dimension. If there are rational maps $X \dasharrow Y$ and $Y \dasharrow X$, then $\M{\Omega}{X}^{(n)} = \M{\Omega}{Y}^{(n)}$ as subgroups of $\Omega(k)$, for any $n$. Moreover, $M(X)=M(Y)$ as ideals of $\Omega(k)$.
\end{proposition}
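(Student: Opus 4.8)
The plan is to prove the two assertions by different means: the equalities $\M{\Omega}{X}^{(n)}=\M{\Omega}{Y}^{(n)}$ follow from the formalism of the previous sections, while $M(X)=M(Y)$ requires a direct computation in $\Omega$ based on the generalised degree formula. Throughout write $d=\dim X=\dim Y$, and let $\pi_Y\colon Y\to\Spec k$, $\pi_T\colon T\to\Spec k$, \dots\ denote structural morphisms. For the first assertion I would apply \autoref{cor:birational} to the pair $(R,H)=(\Omega,\Omega(-)_{(d-n)})$, which satisfies \autoref{def:main} (with $f$ separable in \ref{def:u}); recall from the proof of \autoref{prop:MX} that $\M{H}{X}=\M{\Omega}{X}^{(n)}$ for this $H$. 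The only subtlety is that \ref{def:u} for $\Omega$ is available only for separable $f$. But the correspondence attached to a rational map $Y\dasharrow X$ is $Y\leftarrow Z\to X$, where $Z\subset Y\times_k X$ is the closure of the graph; the map $Z\to Y$ is birational, hence of degree $1$ and in particular separable. Thus \autoref{lemm:main} applies integrally and, as in the proof of \autoref{prop:multiplicity}, gives $\M{H}{Y}\subset\M{H}{Z}\subset\M{H}{X}$; the map $X\dasharrow Y$ gives the reverse inclusion, whence equality.

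For $M(X)=M(Y)$ I would prove the two inclusions separately, say $M(Y)\subset M(X)$. Resolving the graph of $Y\dasharrow X$ produces a smooth projective variety $T$ of dimension $d$ with a birational morphism $a\colon T\to Y$ and a morphism $b\colon T\to X$. Since $M(T)$ is generated by the classes of smooth projective $T'$ of dimension $<d$ admitting a morphism to $T$, composition with $b$ shows immediately that $M(T)\subset M(X)$; the whole difficulty is therefore concentrated in the inclusion $M(Y)\subset M(T)$. For this I would invoke the generalised degree formula of Levine and Morel \cite{LM-Al-07}, which uses resolution of singularities only: applied to the birational morphism $a$ it yields
\[
a_*(1_T)=[a\colon T\to Y]=1_Y+\sum_i \alpha_i\,[\tilde Z_i\to Y]\quad\text{in }\Omega_*(Y),
\]
where $\alpha_i\in\Omega(k)$, the $Z_i\subsetneq Y$ are closed of dimension $<d$, and the $\tilde Z_i$ are smooth projective resolutions. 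Pushing forward to $\Omega(k)$ this already gives the clean statement that $[T]-[Y]\in M(Y)$, i.e.\ that the cobordism class is a birational invariant modulo $M$ of the target.

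To deduce $M(Y)\subset M(T)$, recall that $M(Y)$ is the ideal generated by the classes $(\pi_Y)_*(\xi)$ with $\xi\in\Omega_e(Y)$ and $e<d$, where $\Omega_e(Y)$ denotes the subgroup generated by the classes $[V\to Y]$ with $V$ smooth projective of dimension $e$. I would show $(\pi_Y)_*(\xi)\in M(T)$ by induction on $e$. Since $T$ and $Y$ are smooth one has an lci pullback $a^*\colon\Omega_e(Y)\to\Omega_e(T)$, and as $e<d$ the element $(\pi_T)_*(a^*\xi)$ lies in $M(T)$. On the other hand the projection formula combined with the displayed formula gives
\[
a_*a^*\xi=\xi\cdot a_*(1_T)=\xi+\sum_i\alpha_i\,\big(\xi\cdot[\tilde Z_i\to Y]\big),
\]
so that $(\pi_Y)_*(\xi)=(\pi_T)_*(a^*\xi)-\sum_i\alpha_i\,(\pi_Y)_*\big(\xi\cdot[\tilde Z_i\to Y]\big)$. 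Each product $\xi\cdot[\tilde Z_i\to Y]$, taken in the $\Omega^*(Y)$-module $\Omega_*(Y)$, lies in homological degree $e+\dim Z_i-d<e$ because $Z_i$ has positive codimension; hence by the inductive hypothesis every $(\pi_Y)_*\big(\xi\cdot[\tilde Z_i\to Y]\big)$ lies in $M(T)$, and so does the whole correction term. Therefore $(\pi_Y)_*(\xi)\in M(T)$, which closes the induction (the case $e=0$ being immediate, the correction then being empty). The symmetric argument starting from $X\dasharrow Y$ gives $M(X)\subset M(Y)$.

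The step I expect to be the crux is the control of dimensions in the generalised degree formula. A naive localisation argument would only place $[T]-[Y]$ in $\M{\Omega}{Y}^{(1)}$, which is in general strictly larger than $M(Y)$; it is precisely the fact that the correction terms are resolutions of \emph{proper} subvarieties of $Y$, necessarily of dimension $<d$, that keeps the computation inside the smaller ideal $M(Y)$. This is the feature that replaces the weak factorisation theorem used in the original proof of \cite[Theorem~4.4.17]{LM-Al-07}.
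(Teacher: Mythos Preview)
Your argument for the first assertion matches the paper's exactly: apply \autoref{cor:birational} with $(R,H)=(\Omega,\Omega(-)_{(d-n)})$, noting that the correspondence arising from a rational map is birational, hence separable, so that \ref{def:u} is available.

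For the second assertion your route diverges from the paper's. The paper does not invoke the generalised degree formula at all; instead it deduces $M(X)=M(Y)$ directly from the first assertion with $n=1$, via the following observation. The ring $\Omega(k)$ is graded by dimension, and $\Omega_{<d}(k)$ is a direct summand; write $\pi_{<d}$ for the projection. For any smooth projective $V$ of dimension $d$, the subgroup $\pi_{<d}\,\M{\Omega}{V}^{(1)}$ is generated precisely by the classes $[U]$ with $U$ smooth projective of dimension $<d$ admitting a morphism $U\to V$, and $M(V)$ is the ideal generated by this subgroup. Hence $\M{\Omega}{X}^{(1)}=\M{\Omega}{Y}^{(1)}$ immediately gives $M(X)=M(Y)$.

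Your approach is correct as well: the projection formula and the generalised degree formula (which needs only resolution of singularities, not weak factorisation) give the decomposition you wrote, and the induction on $e$ goes through because the correction terms drop in homological degree and $(\pi_T)_*(a^*\xi)$ lands in $M(T)$ since $\Omega_e(T)$ is generated by classes $[W\to T]$ with $\dim W=e<d$. The trade-off is that your argument imports an external, fairly substantial result from \cite{LM-Al-07} and runs a separate induction, whereas the paper's argument is a one-line reduction to the already-proved first statement using nothing beyond the dimensional grading of $\Omega(k)$. Your method does have the merit of yielding the intermediate statement $[T]-[Y]\in M(Y)$ explicitly, but for the purpose of the proposition the paper's route is shorter and more self-contained.
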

\begin{proof}
The first statement follows by taking $R=\Omega$ and $H=\Omega(-)_{(\dim X -n)}$ in \autoref{cor:birational} (which only requires the validity of \autoref{def:main} for $f$ birational, and in particular separable).

Let $V$ be a smooth projective variety of dimension $d$. The subgroup $\Omega_{<d}(k) \subset \Omega(k)$ generated by the classes of smooth projective varieties of dimension $< d$ is a direct summand of $\Omega(k)$ (indeed $\Omega(k)$ is the quotient of the free group generated by the classes of smooth projective varieties, by relations respecting the dimensional grading). Let $\pi_{<d} \colon \Omega(k) \to \Omega_{<d}(k)$ be the projection. The subgroup $\pi_{<d} \Omega_V(k)^{(1)}$ is generated by the elements $\pi_{<d}[U]$, where $U$ runs over the smooth  projective varieties admitting a non-dominant morphism to $V$. The element $\pi_{<d}[U]$ is equal to $[U]$ if $\dim U<d$, and vanishes otherwise. So $\pi_{<d} \Omega_V(k)^{(1)}$ is the subgroup of $\Omega(k)$ generated by the classes of smooth projective varieties of dimension $<d$ admitting a morphism to $V$ (necessarily non-dominant), while $M(V)$ is the ideal generated by the same elements. Thus $M(V)$ is the ideal of $\Omega(k)$ generated by $\pi_{<d} \Omega_V(k)^{(1)}$, and the second statement follows from the first, with $n=1$.
\end{proof}

The corollary below was proved in \cite[Theorem~4.4.17, 1.]{LM-Al-07} under the additional assumption that $k$ admits weak factorisation.
\begin{corollary}
\label{cor:M}
Assume that $k$ admits resolution of singularities. The ideal $M(X)$ of $\Omega(k)$ is a birational invariant of a smooth projective variety $X$.
\end{corollary}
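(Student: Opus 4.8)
The plan is to read this statement off directly from the Proposition immediately preceding it, which already establishes $M(X)=M(Y)$ for any two smooth projective varieties $X$ and $Y$ of the same dimension admitting rational maps in both directions. The only thing left to do is to check that a birational equivalence furnishes precisely those hypotheses.

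First I would recall that two smooth projective varieties $X$ and $X'$ are birationally equivalent exactly when there is a $k$-isomorphism of function fields $k(X)\cong k(X')$. Such an isomorphism induces mutually inverse birational maps $X\dasharrow X'$ and $X'\dasharrow X$, so in particular there are rational maps in both directions. It also yields $\dim X=\trdeg_k k(X)=\trdeg_k k(X')=\dim X'$, so the two varieties share a common dimension. With both hypotheses verified, the preceding Proposition applies and gives $M(X)=M(X')$ as ideals of $\Omega(k)$; since $X$ and $X'$ range over an arbitrary birational class, $M(X)$ depends only on that class, which is the assertion.

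I do not expect any genuine obstacle here, since all the substantive content is already packaged in the preceding Proposition — namely the reduction through $p$-equivalence via \autoref{cor:birational}, together with the identification of $M(V)$ with the ideal generated by the appropriate truncation of $\M{\Omega}{V}^{(1)}$. The present corollary merely recasts that result in the vocabulary of birational invariance, and the sole point to notice is that a birational equivalence automatically supplies both the equality of dimensions and the two rational maps needed to invoke the Proposition.
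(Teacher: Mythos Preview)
Your proposal is correct and matches the paper's approach: the corollary is stated without proof there, being an immediate consequence of the preceding Proposition, and you have correctly identified that a birational equivalence supplies both the equal-dimension hypothesis and the pair of rational maps needed to invoke it. (One small quibble: the preceding Proposition actually appeals to \autoref{cor:birational}, which is the integral statement rather than a $p$-local one, so ``reduction through $p$-equivalence'' is not quite the right description of its mechanism --- but this does not affect your argument.)
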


\section{Relations between $n_X$ and $\de{X}$}
\label{sect:nxdx}
We have the obvious relation $\de{X} \mid n_X$. When $\dim X=0$, we have $n_X=\de{X}$. A consequence of the next theorem is that $\dep{X}=\nep{X}$ when $\dim X < p-1$.

\begin{theorem}
\label{th:main}
Let $X$ be a complete variety. Assume that one of the following conditions holds.
\begin{enumerate}[label=(\roman*)]
\item \label{it:char} The characteristic of $k$ is not $p$.

\item \label{it:small} We have $\dim X < p(p-1)$.

\item \label{it:regular} The variety $X$ is regular and (quasi-)projective over $k$.
\end{enumerate}

Then
\[
\nep{X} \leq \dep{X} + \Big[ \frac{\dim X}{p-1} \Big].
\]
\end{theorem}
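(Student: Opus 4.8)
The statement relates two invariants: the index $n_X$ (via its $p$-valuation $n_p(X)$) and the Euler-characteristic g.c.d.\ $d_X$ (via $d_p(X)$), with the claim that $n_p(X) - d_p(X) \leq [\dim X/(p-1)]$. The natural engine for comparing these is the Hirzebruch--Riemann--Roch theorem, which connects the Chow group (governing $n_X$) with the Grothendieck group (governing $d_X$) through the Todd class. So the plan is to reduce, in each of the three cases, to a situation where HRR is available, and then extract the bound by a valuation computation. The bridge I would exploit is that a closed point of degree $\delta$ on $X$ contributes to both the index and (through its structure sheaf, with all higher Todd corrections landing in lower codimension) to the Euler characteristic, and the discrepancy between the two is measured by denominators in the Todd genus, which are controlled by $p$-adic valuations of factorials.

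\textbf{Reducing the three cases.}
First I would handle the reductions so that only one core computation remains. In case \ref{it:char}, Gabber's theorem (quoted in \autoref{sect:pequivalence}) lets me replace $X$ by a $p$-equivalent projective \emph{regular} variety of the same dimension; by \autoref{cor:main} and \autoref{prop:cor} both $n_p$ and $d_p$ are preserved under $p$-equivalence (up to the inequality direction we need), so this lands us in case \ref{it:regular}. Case \ref{it:small} is the delicate one where resolution/regularity is not assumed; here I expect to argue directly on $X$ itself, perhaps by a dévissage on coherent sheaves reducing the Euler characteristic computation to structure sheaves of integral closed subschemes and their normalizations, keeping the dimension bound $\dim X < p(p-1)$ in play precisely to control which factorial denominators can occur. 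Case \ref{it:regular} is where HRR applies on the nose: for a regular quasi-projective $X$ I can push the computation onto $X$ directly using the isomorphism $\varphi_X$ between $K$-theory of vector bundles and of coherent sheaves.

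\textbf{The core HRR computation.}
The heart of the argument is the following. Choose a closed point (or a zero-cycle) on $X$ realizing the index, giving a class whose degree has $p$-valuation $n_p(X)$. Applying HRR to an appropriate coherent sheaf, $\chi(X,\mathcal{G}) = \deg(\ch(\mathcal{G})\cdot \Td(T_X))$, the top-dimensional part recovers a multiple of the index, while the correction terms from $\Td$ in lower codimension are divisible by higher powers of $p$. The Todd class has denominators whose $p$-valuation in codimension $j$ is bounded by roughly $[j/(p-1)]$ — this is the classical Kummer/von~Staudt type estimate on $v_p$ of the Bernoulli-type denominators in the Todd polynomial, and it is exactly the source of the factor $[\dim X/(p-1)]$. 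So I would isolate the top term, clear the worst denominator appearing up to codimension $\dim X$, and read off that $p^{n_p(X)}$ divides $p^{d_p(X)}\cdot p^{[\dim X/(p-1)]}$ times a $p$-adic unit.

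\textbf{The main obstacle.}
The genuine difficulty is the sharp denominator estimate for the Todd class: I must show that the denominator of the codimension-$j$ component of $\Td(T_X)$ contributes at most $[j/(p-1)]$ to the $p$-valuation, and assemble these across codimensions without losing a factor. This is where the specific form $[\dim X/(p-1)]$ must be matched exactly rather than bounded loosely, and it is also the place where cases \ref{it:small} and \ref{it:char} versus \ref{it:regular} diverge, since without regularity I cannot invoke the tangent-bundle formulation of HRR and must instead control the \emph{singular} Todd/Riemann--Roch contributions (for instance via the localized Todd class or a resolution in case \ref{it:char}). Getting the valuation of the factorial denominators to align precisely with the floor function $[\dim X/(p-1)]$ is the crux; everything else is reduction and bookkeeping.
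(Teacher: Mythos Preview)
Your reduction of case \ref{it:char} to case \ref{it:regular} via Gabber's theorem does not work. If $X'\to X$ is an alteration of degree prime to $p$ with $X'$ regular, the push-forwards on $\G$ and $\CH_0$ give $n_p(X)\le n_p(X')$ and $d_p(X)\le d_p(X')$. Knowing the inequality for $X'$ then yields only
\[
n_p(X)\;\le\;n_p(X')\;\le\;d_p(X')+\Big[\frac{\dim X}{p-1}\Big],
\]
and since $d_p(X)\le d_p(X')$ is the wrong direction, you cannot conclude the bound for $X$. The results you cite do not help: \autoref{cor:main} requires \emph{both} varieties to be regular, and \autoref{prop:cor} gives $d_p(X)\le d_p(X')$, again the wrong way. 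Getting $d_p(X')\le d_p(X)$ would require something like \autoref{lemm:main} with target $X$, but that lemma needs the target in $\Reg$. The paper avoids this entirely: for \ref{it:char} and \ref{it:small} it uses Chow's lemma, then picks a closed subvariety $Z\subset X$ with $d_p(X)=v_p(\chi(Z,\Oc_Z))$ and applies the \emph{singular} Riemann--Roch transformation $\tau\colon\G\to\CH\otimes\Qq$ of \cite[Theorem~18.3]{Ful-In-98} directly to $[\Oc_Z]$, invoking the integrality result \cite[Theorem~4.2]{firstst} for $\tau_0$ (no regularity of $Z$ needed).

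A second, smaller issue: in your core computation you start from a zero-cycle realising $n_p(X)$ and try to produce an Euler characteristic. The argument runs the other way. One takes $a\in\K(X)$ with $v_p(\chi(a))=d_p(X)$, writes $\chi(a)=\deg\big(\Td(\Tan_x)\cdot\ch(a)\big)$ via GRR, and uses the denominator bounds (your ``main obstacle'', which is exactly \autoref{lemm:intch} together with \cite[Lemma~6.3]{firstst}) to see that $p^{[\dim X/(p-1)]}$ times the zero-cycle lies in $\CH_0(X)\otimes\Zz_{(p)}$; its degree is then in $n_X\Zz_{(p)}$, giving the inequality. Your identification of the Todd/Chern denominator estimate as the crux is correct, and your treatment of case \ref{it:regular} is essentially the paper's; but the reduction strategy for \ref{it:char} needs to be replaced by the singular Riemann--Roch argument.
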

\begin{proof}
For \ref{it:char} and \ref{it:small}, we may assume that $X$ is projective over $k$ by Chow's lemma. Indeed if $X' \to X$ is an envelope \cite[Definition~18.3]{Ful-In-98}, then it follows from \cite[Lemma~18.3]{Ful-In-98} that $n_X=n_{X'}$ and $\de{X}=\de{X'}$. The group $\G(X)$ is generated by the classes $[\Oc_Z]$, where $Z$ runs over the closed subvarieties of $X$. Therefore $d_X$ is the g.c.d.\ of the integers $\chi(Z,\Oc_Z)$, for $Z$ as above. In particular, we can find a closed subvariety $Z$ of $X$ such that 
\[
\dep{X}=v_p(\chi(Z,\Oc_Z)).
\]
We now claim that, under the assumption \ref{it:char} or \ref{it:small}, we have,
\begin{equation}
\label{eq:Z}
\nep{Z} \leq v_p(\chi(Z,\Oc_Z)) +  \Big[ \frac{\dim Z}{p-1} \Big].
\end{equation}
This will conclude the proof, since $\nep{X} \leq \nep{Z}$, and $\dim Z \leq \dim X$.

If we assume \ref{it:small}, then $\dim Z < p(p-1)$, and \eqref{eq:Z} follows from \cite[Proposition~9.1]{firstst}. But the argument of loc.\ cit.\ can also be used in the situation \ref{it:char}. Namely, we let $\tau_n \colon \G(-) \to \CH_n(-) \otimes \Qq$ be the map of \cite[Theorem~18.3]{Ful-In-98} (it is the homological Chern character, denoted $\ch_n$ in \cite{firstst}). Let $z\colon Z \to k$ be the structural morphism of $Z$. Then by \cite[Theorem~18.3 (1)]{Ful-In-98}
\begin{equation}
\label{eq:V}
\chi(Z,\Oc_Z) = \tau_0 \circ  z_*[\Oc_Z] = z_* \circ \tau_0[\Oc_Z].
\end{equation}
If \ref{it:char} holds, then \cite[Theorem~4.2]{firstst} says that the element $p^{[\dim Z/(p-1)]}\cdot \tau_0[\Oc_Z] \in \CH_0(Z) \otimes \Qq$ belongs to the image of $\CH_0(Z) \otimes \Zz_{(p)}$, hence can written as $b \otimes \lambda^{-1}$, with $\lambda$ an integer prime to $p$, and $b \in \CH_0(Z)$. Thus:
\[
\Big[ \frac{\dim Z}{p-1} \Big] + v_p(z_* \circ \tau_0[\Oc_Z]) = v_p(z_*b) \geq \nep{Z}.
\]
Using \eqref{eq:V}, this gives \eqref{eq:Z}.

Now we assume \ref{it:regular}. We identify the groups $\G(X)$ and $\K(X)$ using \eqref{eq:regular}, and take $a \in \K(X)$. Let $x \colon X \to k$ be the structural morphism of $X$. Since $x$ is quasi-projective and $X$ regular, the morphism $x$ is a local complete intersection (i.e.\ factors as a regular closed embedding followed by a smooth morphism); let $\Tan_x \in \K(X)$ be its virtual tangent bundle. We apply the Grothendieck-Riemann-Roch theorem \cite[Theorem~18.2]{Ful-In-98}, and get in $\Qq=\G(k) \otimes \Qq=\CH(k) \otimes \Qq$ the equalities
\begin{equation}
\label{eq:GRR}
x_*a=\ch \circ x_*a=x_* \circ \Td(\Tan_x) \circ \ch a.
\end{equation}
Here $\ch$ is the Chern character, with components $\ch^n \colon \K(X) \to \CH^n(X) \otimes \Qq$, and $\Td=\sum_n \Td^n$ is the Todd class. By \cite[Lemma~6.3]{firstst}, the morphism
\[
p^{[n/(p-1)]} \cdot \Td^n(\Tan_x) \colon  \CH^{\bullet}(X) \otimes \Qq \to \CH^{\bullet+n}(X) \otimes \Qq
\]
sends the image of $\CH^{\bullet}(X) \otimes \Zz_{(p)}$ to the image $\CH^{\bullet+n}(X) \otimes \Zz_{(p)}$. The degree zero component of $p^{[\dim X/(p-1)]}\cdot \Td(\Tan_X) \circ \ch a$ is
\[
\sum_{n=0}^{\dim X} \big(p^{[(\dim X-n)/(p-1)]}\cdot \Td^{\dim X-n}(\Tan_x)\big) \circ \big(p^{[n/(p-1)]} \cdot \ch^n a\big) \in \CH_0(X) \otimes \Qq.
\]
By \autoref{lemm:intch} and the remark above, this element belongs to the image of $\CH_0(X) \otimes \Zz_{(p)} \to \CH_0(X) \otimes \Qq$. Using \eqref{eq:GRR}, we obtain,
\[
\Big[ \frac{\dim X}{p-1} \Big] + v_p(x_*a) \geq \nep{X}.
\]
The statement follows, since we can choose $a$ such that $v_p(x_*a)=\dep{X}$.
\end{proof}

\begin{lemma}
\label{lemm:intch}
Let $X$ be a variety, and $\ch \colon \K(X) \to \CH(X) \otimes \Qq$ the Chern character. Then for all integers $n$ and elements $a \in \K(X)$, we have
\[
p^{[n/(p-1)]} \cdot \ch^n a \in \im\big(\!\CH^n(X) \otimes \Zz_{(p)} \to \CH^n(X) \otimes \Qq\big).
\]
\end{lemma}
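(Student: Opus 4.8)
The plan is to reduce to the class of a single vector bundle, extract the denominator of $\ch^n$ via the splitting principle and Newton's identities, and conclude with an elementary $p$-adic estimate. First I would note that the set of $a \in \K(X)$ satisfying the desired conclusion is a subgroup of $\K(X)$: indeed $\ch^n$ is additive, and the image of $\CH^n(X) \otimes \Zz_{(p)}$ is a subgroup of $\CH^n(X) \otimes \Qq$, so if $p^{[n/(p-1)]} \cdot \ch^n a$ and $p^{[n/(p-1)]} \cdot \ch^n a'$ both lie in that image, then so does $p^{[n/(p-1)]} \cdot \ch^n(a - a')$. Since $\K(X)$ is generated by the classes $[E]$ of locally free coherent sheaves $E$, it suffices to prove the statement for $a = [E]$. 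The cases $n < 0$ and $n = 0$ are trivial (the group $\CH^n(X)$ vanishes for $n<0$, and $\ch^0[E] = \rank E$ is an integer), so I may assume $n \geq 1$.

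The key integrality input comes from the splitting principle. Working over a flag bundle of $E$, on which the pull-back of Chow groups is (split) injective and $E$ acquires a filtration with line-bundle quotients, the component $\ch^n[E]$ is expressed as $\tfrac{1}{n!}$ times the $n$-th power sum of the Chern roots. By Newton's identities this power sum is a polynomial $P_n$ with \emph{integer} coefficients in the elementary symmetric functions, that is, in the Chern classes $c_1(E),\dots,c_n(E)$. Consequently $n! \cdot \ch^n[E]$ is the image of the honest integral class $P_n\big(c_1(E),\dots,c_n(E)\big) \in \CH^n(X)$, and therefore $p^{[n/(p-1)]} \cdot \ch^n[E]$ is the image of $\tfrac{p^{[n/(p-1)]}}{n!} \cdot P_n\big(c_1(E),\dots,c_n(E)\big)$ under $\CH^n(X) \otimes \Qq[1/\!\cdots] $, more precisely under $\CH^n(X)\otimes\Qq$.

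It then remains to check that the scalar $p^{[n/(p-1)]}/n!$ lies in $\Zz_{(p)}$, i.e.\ that $[n/(p-1)] \geq v_p(n!)$. By Legendre's formula $v_p(n!) = (n - s_p(n))/(p-1)$, where $s_p(n)$ denotes the sum of the base-$p$ digits of $n$. Writing $n = q(p-1) + r$ with $0 \leq r < p-1$, so that $[n/(p-1)] = q$, the inequality becomes $s_p(n) \geq r$. This holds because $p \equiv 1 \pmod{p-1}$ forces $s_p(n) \equiv n \equiv r \pmod{p-1}$, while $s_p(n) \geq 1$ for $n \geq 1$; the least positive integer congruent to $r$ modulo $p-1$ is $r$ itself when $0 < r < p-1$, and the case $r = 0$ is immediate. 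Hence $\tfrac{p^{[n/(p-1)]}}{n!} \cdot P_n(c_\bullet(E)) \in \CH^n(X) \otimes \Zz_{(p)}$, which yields the lemma.

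The only genuinely delicate point is the second step: verifying that the formula $n! \cdot \ch^n[E] = P_n\big(c_1(E),\dots,c_n(E)\big)$ holds as an identity of \emph{integral} classes in Fulton's framework on a possibly singular $X$, i.e.\ that both the splitting principle for Chern classes and the integrality of the Newton polynomials are available there. Granting that, the reduction to vector bundles is formal and the remaining obstruction is purely the digit-sum estimate above.
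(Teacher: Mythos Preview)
Your proof is correct and follows essentially the same route as the paper's: both use the splitting principle to reduce to the elementary estimate $v_p(n!)\le[n/(p-1)]$. The paper is terser---it reduces directly to line bundles (where $\ch^n[L]=c_1(L)^n/n!$) rather than invoking Newton's identities, and handles the delicate point you flag by noting that $\ch$ factors through the operational Chow ring, citing \cite[Lemma~6.3]{firstst} for the details.
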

\begin{proof}
This follows from the splitting principle. In more details, we proceed exactly as in \cite[Lemma~6.3]{firstst}, using that $\ch$ factors through the operational Chow ring tensored with $\Qq$, and that for a line bundle $L$,
\[
\ch [L] = \sum_{n \geq 0} \frac{c_1(L)^n}{n!}.\qedhere
\]
\end{proof}

\begin{remark}
From the proof, we see that the statement of \autoref{lemm:intch} may be improved: the exponent $[n/(p-1)]$ can be replaced by $[(n-1)/(p-1)]$.
\end{remark}

\begin{proposition}
\label{cor:cdim}
Let $X$ be a complete regular variety. Then
\[
\cdim_p(X) \geq\left\{ \begin{array}{ll}
		  (p-1)\cdot (\nep{X}-\dep{X}) &\mbox{ if $k$ has characteristic $\neq p$,} \\
		  (p-1)\cdot \min(p,\nep{X}-\dep{X}) &\mbox{ if $k$ has characteristic $=p$.}
       		\end{array}
	\right.
\]
\end{proposition}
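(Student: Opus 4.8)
The plan is to realise the canonical $p$-dimension by a subvariety and to compare the invariants $n_p$ and $d_p$ of that subvariety with those of $X$, so that \autoref{th:main} can be applied to it. By the description of $\cdim_p$ recalled in \autoref{sect:pequivalence}, I would choose a closed subvariety $Z \subseteq X$ with $\dim Z = \cdim_p(X)$ admitting a correspondence $X \cor Z$ of some multiplicity $m$ prime to $p$. Then $Z$ is a complete variety, and $X$ is regular, so both \autoref{prop:cor} and \autoref{prop:multiplicitychow} apply to $X \cor Z$, with $X$ playing the role of the regular variety. In addition, the closed immersion $Z \hookrightarrow X$ is a proper morphism, which I shall use in the reverse direction.

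First I would transport the two invariants across the correspondence. From \autoref{prop:cor} applied to $X \cor Z$ we obtain $\dep{Z} \leq \dep{X}$. For the index, the closed immersion $Z \hookrightarrow X$ identifies the closed points of $Z$ with a subset of those of $X$ while preserving residue degrees, so $n_X \mid n_Z$ and hence $\nep{X} \leq \nep{Z}$. (Both comparisons are in fact equalities, using \autoref{prop:multiplicitychow} and the inclusion $\M{(\G)}{Z} \subseteq \M{(\G)}{X}$ respectively, but only the two inequalities above are needed.) Together they give $\nep{Z} - \dep{Z} \geq \nep{X} - \dep{X}$.

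The key step is then to apply \autoref{th:main} to the complete variety $Z$, which yields $\nep{Z} \leq \dep{Z} + \big[\dim Z/(p-1)\big]$ whenever one of its hypotheses holds for $Z$. Rearranging and invoking the previous paragraph, $\big[\cdim_p(X)/(p-1)\big] \geq \nep{Z}-\dep{Z} \geq \nep{X}-\dep{X}$, whence $\cdim_p(X) \geq (p-1)(\nep{X}-\dep{X})$. When $k$ has characteristic $\neq p$, hypothesis \ref{it:char} is satisfied by $Z$ unconditionally, so this settles the first case at once.

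The main obstacle is the characteristic $p$ case: since $Z$ need not be regular, neither \ref{it:char} nor \ref{it:regular} is available for $Z$, and I can only invoke \ref{it:small}, which requires $\dim Z < p(p-1)$. I would therefore split on the size of $\cdim_p(X) = \dim Z$. If $\dim Z < p(p-1)$, then \ref{it:small} applies and the argument above gives $\cdim_p(X) \geq (p-1)(\nep{X}-\dep{X}) \geq (p-1)\min\!\big(p,\nep{X}-\dep{X}\big)$. If instead $\dim Z \geq p(p-1)$, then $\cdim_p(X) \geq p(p-1) = (p-1)\cdot p \geq (p-1)\min\!\big(p,\nep{X}-\dep{X}\big)$ directly. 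Either way the asserted bound holds, and the two cases exhaust the characteristic $p$ situation.
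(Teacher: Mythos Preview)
Your proof is correct and follows the paper's approach exactly: choose a closed $Z\subset X$ realising $\cdim_p(X)$, use \autoref{prop:cor} and the inclusion $Z\hookrightarrow X$ to compare invariants, and apply \autoref{th:main}\,\ref{it:char},\,\ref{it:small} to $Z$ (you simply make explicit the characteristic-$p$ case split that the paper leaves implicit, namely that when $\dim Z\geq p(p-1)$ the bound holds trivially). One harmless slip in your parenthetical aside: the ``respectively'' is reversed---\autoref{prop:multiplicitychow} is what gives $\nep{Z}\leq\nep{X}$, and the inclusion $\M{(\G)}{Z}\subset\M{(\G)}{X}$ is what gives $\dep{X}\leq\dep{Z}$---but as you observe, neither direction is needed for the argument.
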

\begin{proof}
Let $Z \subset X$ be a closed subvariety admitting a correspondence $X \cor Z$ of multiplicity prime to $p$, and such that $\dim Z=\cdim_p(X)$. By \autoref{prop:cor} we have $\dep{Z} \leq \dep{X}$. Since there is a morphism $Z \to X$, we have $\nep{X} \leq \nep{Z}$. We conclude by applying \autoref{th:main} \ref{it:char}, \ref{it:small} to the complete variety $Z$.
\end{proof}

\section{Examples}
\label{sect:examples}
In view of \autoref{sect:nxdx}, it may seem desirable to find conditions on a complete variety $X$ that give upper bounds for $\dep{X}$.

\begin{proposition}
\label{ex:rational}
Let $X$ be a complete smooth variety. Assume that there is a field extension $l/k$, and a complete smooth $l$-variety $Y$ such that $X_{l} \times_l Y$ is a rational $l$-variety. Then $\de{X}=1$.
\end{proposition}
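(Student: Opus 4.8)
The plan is to exhibit a single coherent sheaf on $X$ whose Euler characteristic is $\pm 1$; the structure sheaf $\Oc_X$ will do. Recall from the discussion of the Grothendieck group above that $\de{X}$ is the g.c.d.\ of the integers $\chi(X,\Gc)$ as $\Gc$ ranges over coherent sheaves, so in particular $\de{X}$ divides $\chi(X,\Oc_X)$. Hence it suffices to prove that $\chi(X,\Oc_X)=\pm 1$.

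Write $W=X_l\times_l Y$. Since $X_l$ and $Y$ are smooth and complete over $l$, so is $W$, and by hypothesis $W$ is rational (in particular integral, which forces $X_l$ to be a variety). First I would use the Künneth formula for coherent cohomology over the field $l$, which gives $H^m(W,\Oc_W)=\bigoplus_{i+j=m}H^i(X_l,\Oc_{X_l})\otimes_l H^j(Y,\Oc_Y)$ and therefore the multiplicativity
\[
\chi(W,\Oc_W)=\chi(X_l,\Oc_{X_l})\cdot \chi(Y,\Oc_Y).
\]
Next, flat base change along $\Spec l\to\Spec k$ identifies $H^i(X_l,\Oc_{X_l})$ with $H^i(X,\Oc_X)\otimes_k l$, whence $\chi(X_l,\Oc_{X_l})=\chi(X,\Oc_X)$. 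Combining these two facts, $\chi(X,\Oc_X)$ divides $\chi(W,\Oc_W)$ in $\Zz$. Note that this route genuinely needs the structure sheaf: only its Euler characteristic survives the passage from the base $l$ back to $k$, so the cruder comparison $\de{X_l}\mid \de{X}$ (which goes the wrong way) cannot be used instead.

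It thus remains to see that $\chi(W,\Oc_W)=1$. This is where the rationality of $W$ enters, and I expect it to be the main point of the argument: the Euler characteristic of the structure sheaf is a birational invariant of smooth complete varieties, so $\chi(W,\Oc_W)=\chi(\mathbb{P}^n_l,\Oc)=1$ with $n=\dim W$. Over a field of characteristic zero this is immediate from Hodge theory, since the $H^i(W,\Oc_W)$ compute the numbers $h^{0,i}$, which vanish for $i>0$ on a rational variety; in positive characteristic Hodge symmetry fails and one must instead invoke the birational invariance of $\chi(\Oc)$ directly, which is the delicate input (and where resolution-type results would enter to dominate $W$ and $\mathbb{P}^n$ by a common smooth model). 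Granting $\chi(W,\Oc_W)=1$, the displayed multiplicativity forces $\chi(X,\Oc_X)=\pm1$, and hence $\de{X}=1$.
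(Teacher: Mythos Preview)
Your proposal is correct and follows essentially the same route as the paper: reduce to showing $\chi(X,\Oc_X)=\pm 1$, pass to $l$ (the paper simply extends scalars and assumes $k=l$, which is equivalent to your flat base change step), use multiplicativity $\chi(W,\Oc_W)=\chi(X_l,\Oc_{X_l})\cdot\chi(Y,\Oc_Y)$, and invoke the birational invariance of $\chi(\Oc)$ on smooth complete varieties to get $\chi(W,\Oc_W)=1$. For the last step in arbitrary characteristic, the paper supplies the reference you were reaching for: Chatzistamatiou--R\"ulling, \emph{Higher direct images of the structure sheaf in positive characteristic}, Theorem~3.2.8, which gives birational invariance of the groups $H^i(-,\Oc)$ themselves (not just their Euler characteristic) without any resolution hypotheses.
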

\begin{proof}
It will be sufficient to prove that $\chi(X,\Oc_X)=\pm 1$. While doing so, we may extend scalars, and thus assume that $k=l$. The variety $Z=X \times_k Y$ is then rational. Since the coherent cohomology groups of the structure sheaf are birational invariants of a complete smooth variety \cite[Theorem~3.2.8]{Higherdirect}, so is its Euler characteristic. The structure sheaf of the projective space has Euler characteristic equal to $1$, hence $\chi(Z,\Oc_Z)=1$. Since $\chi(Z,\Oc_Z)=\chi(X,\Oc_X) \cdot \chi(Y,\Oc_Y)$ by \cite[Example~15.2.12]{Ful-In-98}, we are done.
\end{proof}
\begin{corollary}
\label{cor:rational}
Let $X$ be a complete, smooth, geometrically rational variety. Then $\de{X}=1$.
\end{corollary}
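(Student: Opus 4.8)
The plan is to reduce the geometrically rational case to the already-established \autoref{ex:rational} by producing a suitable auxiliary variety $Y$ over an extension field. Concretely, I would argue as follows.

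First I would invoke the definition of geometrically rational: there is a field extension $l/k$ over which $X_l$ becomes a rational $l$-variety. The temptation is to apply \autoref{ex:rational} directly, but that proposition requires a product $X_l \times_l Y$ to be rational, not $X_l$ itself. So the clean move is to take $Y=\Spec l$, i.e.\ a point, regarded as a complete smooth $l$-variety of dimension $0$. Then $X_l \times_l Y = X_l$ is rational by hypothesis, and the conditions of \autoref{ex:rational} are met verbatim. I would therefore write the proof as a one-line application: \emph{Since $X$ is geometrically rational, there is an extension $l/k$ with $X_l$ rational; applying \autoref{ex:rational} with $Y=\Spec l$ gives $\de{X}=1$.}

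The only point that needs a moment of care is whether $Y=\Spec l$ legitimately counts as a ``complete smooth $l$-variety'' in the sense used in \autoref{ex:rational}: it is integral, proper over $l$, and smooth (being the base field's spectrum), so it qualifies, and $X_l \times_l \Spec l \cong X_l$ is rational as required. I expect this to be the main (and essentially only) obstacle—namely confirming that the degenerate choice $Y=\Spec l$ is admissible and that the product with a point returns $X_l$ up to canonical isomorphism. Everything else, including the extension of scalars and the Euler-characteristic multiplicativity, is already subsumed inside the proof of \autoref{ex:rational}, so no further computation is needed here.

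An alternative, should one wish to avoid the point-as-variety device, would be to repeat the argument of \autoref{ex:rational} directly: extend scalars to assume $X$ is rational over $k$, use the birational invariance of the coherent cohomology of $\Oc_X$ from \cite[Theorem~3.2.8]{Higherdirect} to compare $\chi(X,\Oc_X)$ with $\chi(\mathbb{P}^n,\Oc)=1$, and conclude $\de{X}=1$. But since \autoref{ex:rational} has already packaged exactly this reasoning, the shortest honest proof is simply to specialise it, and that is the route I would take.
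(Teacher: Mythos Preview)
Your proposal is correct and is precisely the intended argument: the paper states the result as an immediate corollary of \autoref{ex:rational}, and specialising that proposition with $Y=\Spec l$ (a complete smooth $l$-variety of dimension~$0$) is exactly how one reads off the implication.
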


\begin{example}[Projective homogeneous varieties]
Let $X$ be a complete, smooth, geometrically connected variety, which is homogeneous under a semi-simple linear algebraic group. Then $X$ is geometrically rational. Thus by \autoref{cor:rational}, we have $\de{X}=1$.
\end{example}

\begin{proposition}
\label{ex:rationallyconnected}
Assume that $k$ has characteristic zero. Let $X$ be a complete, smooth, geometrically rationally connected variety. Then $\de{X}=1$.
\end{proposition}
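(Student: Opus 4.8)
The plan is to follow the strategy of \autoref{ex:rational}: it suffices to prove that $\chi(X,\Oc_X)=1$, since $\de{X}$ divides this integer by definition. I would first record two invariance properties of this Euler characteristic that make the necessary reductions legitimate. On the one hand, for any field extension $K/k$, flat base change yields $H^i(X_K,\Oc_{X_K}) \cong H^i(X,\Oc_X)\otimes_k K$, so that each cohomology dimension, and hence $\chi(X,\Oc_X)$, is unchanged by extension of scalars. On the other hand, by \cite[Theorem~3.2.8]{Higherdirect} the groups $H^i(-,\Oc)$ are birational invariants of smooth complete varieties, so $\chi(-,\Oc)$ is one as well.

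Using these, I would reduce to a convenient setting. Extending scalars, I may assume that $k$ is algebraically closed, so that $X$ is rationally connected. Next, applying Chow's lemma and resolution of singularities in characteristic zero, I may replace $X$ by a smooth \emph{projective} variety birational to it; this model is again rationally connected, rational connectedness being a birational invariant of smooth projective varieties. Thus I am reduced to the case where $X$ is a smooth projective rationally connected variety over an algebraically closed field of characteristic zero.

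In this situation the central input is the vanishing $H^i(X,\Oc_X)=0$ for all $i>0$, a theorem of Koll\'ar--Miyaoka--Mori: over $\mathbb{C}$ it follows from Hodge symmetry together with the vanishing $H^0(X,\Omega^i_X)=0$ of global differential $i$-forms on a rationally connected variety, and the general characteristic-zero case follows by the Lefschetz principle. Since $X$ is connected we have $H^0(X,\Oc_X)=k$, whence $\chi(X,\Oc_X)=1$, which finishes the proof. I expect the only genuine difficulty to be this cohomological vanishing, which is a substantial geometric theorem; the remaining work is merely the bookkeeping of the reductions above. The point to be careful about is that the vanishing, and the Hodge-theoretic argument behind it, applies to smooth projective varieties over a characteristic-zero field, so one must first descend to that setting via the two invariance properties before the geometric input can be invoked.
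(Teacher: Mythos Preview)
Your proposal is correct and follows essentially the same route as the paper: reduce to $X$ rationally connected by extending scalars, then invoke the vanishing $H^i(X,\Oc_X)=0$ for $i>0$ (the paper cites \cite[Corollary~4.18,~a)]{Debarre-book}) to conclude $\chi(X,\Oc_X)=1$. Your explicit reduction to the projective case via Chow's lemma, resolution, and the birational invariance of $H^i(-,\Oc)$ is an extra care that the paper's terse proof leaves implicit.
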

\begin{proof}
We proceed as in the proof of \autoref{ex:rational}, and assume that $X$ is rationally connected. Then the groups $H^i(X,\Oc_X)$ vanish for $i>0$ by \cite[Corollary~4.18, a)]{Debarre-book}, and therefore $\chi(X,\Oc_X)=1$.
\end{proof}
\begin{remark}[Decomposition of the diagonal]
The statement of \autoref{ex:rationallyconnected} is more generally true (still in characteristic zero, for $X$ smooth complete) under the assumption that the diagonal decomposes, i.e.\ that there is a zero-cycle $z$ on $X$ whose degree $N$ is not zero, and a non-empty open subvariety $U$ of $X$, such that the cycles $[U] \times_k z$ and $N\cdot [\Gamma_U]$ are rationally equivalent on $U \times_k X$, where $\Gamma_U \subset U\times_k X$ is the graph of $U \to X$. This is the case when $\CH_0(X_\Omega)=\Zz$, where $\Omega$ is an algebraic closure of $k(X)$ (see e.g.\ the introduction of \cite{Esnault-trivialchow}).
\end{remark}

\begin{example}[Complete intersections]
\label{ex:completeinter}
Let $H$ be a complete intersection of hypersurfaces in $\mathbb{P}^n$ of degrees $\delta_1,\cdots,\delta_m$, with $\delta_1+ \cdots + \delta_m \leq n$. When $H$ is smooth, it is Fano, hence geometrically rationally chain connected. If, in addition, $k$ has characteristic zero, the variety $H$ is geometrically rationally connected, and \autoref{ex:rationallyconnected} shows that $\chi(H,\Oc_H)=1$. Alternatively, a direct computation shows that this is true in general (in any characteristic, for possibly singular $H$). This can be used to produce other sufficient conditions on a complete variety $X$ for the equality $d_X=1$, namely:

--- $X$ becomes isomorphic to such an $H$ after extension of the base field,

--- or $X$ is smooth and becomes birational to such a smooth $H$ after extension of the base field.
\end{example}

\begin{example}[Hypersurfaces]
\label{ex:hypersurfaces}
Let $H$ be a regular hypersurface of degree $p$ and dimension $\geq p-1$. By \autoref{ex:completeinter}, we have $\dep{H}=0$. Assume that $H$ has no closed point of degree prime to $p$. Then by \autoref{cor:cdim}, we have $\cdim_p(H) \geq p-1$. 

In case $\dim H=p-1$, this bound is optimal, and $H$ is $p$-incompressible (see also \cite[\S~7.3]{Mer-St-03} and \cite[Example~6.4]{Zai-09}). A more general statement was proved in \cite[Proposition~10.1]{firstst}. 

When $p=2$, we have $\cdim_2(H)\geq 1$. This bound is sharp when $\dim H=2$, as can be seen by taking for $H$ an anisotropic smooth projective Pfister quadric surface. In general this is far from being sharp: in characteristic not two, it is known that $\cdim_2(H)=2^n-1$, where $n$ is such that $2^n-1 \leq \dim H < 2^{n+1}-1$ (see \cite[\S~7]{Mer-St-03}).
\end{example}

\begin{example}[Separation]
Let $X$ and $Y$ be complete varieties, with $X$ regular. Assume that the degree of any closed point of $Y$ is divisible by $p$, that $\dep{X}=0$ (see in particular \autoref{ex:completeinter} and \autoref{ex:rationallyconnected}), and that
\[
\dim Y < p-1 \leq \dim X.
\]
Then the degree of any closed point of $Y_{k(X)}$ is divisible by $p$. (Indeed, assuming the contrary, there would be a correspondence $X \cor Y$ of degree prime to $p$. Then $\dep{Y}=0$ by \autoref{prop:cor}, hence $\nep{Y}=0$ by \autoref{th:main}.)
\end{example}

\begin{proposition}
\label{prop:fiber}
Let $f \colon Y \dasharrow X$ be a rational map between complete regular varieties. Let $F$ be the generic fiber of $f$, considered as a $k(X)$-variety. Then
\begin{enumerate}[label=(\roman*)]
\item \label{it:dep} If $\dep{F}=0$ then $\dep{X}=\dep{Y}$.

\item \label{it:nep} We have $\nep{X} \leq \nep{Y} \leq \nep{X} + \nep{F}$.
\end{enumerate}
\end{proposition}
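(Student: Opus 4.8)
The plan is to replace the rational map by a morphism and then treat \ref{it:nep} by index computations with correspondences, and \ref{it:dep} by a relative form of \autoref{lemm:main} for the $\G$-theory of the resulting fibration. Concretely, I would let $\tilde{Y}\subseteq Y\times_k X$ be the closure of the graph of $f$, with projections $\pi\colon \tilde{Y}\to Y$ (proper and birational) and $g\colon \tilde{Y}\to X$ (proper), and take $F$ to be the generic fibre of $g$ over $\eta=\Spec k(X)$, a complete $k(X)$-variety. All computations take place in the $K$-theory instance of \S\ref{sect:Ktheory}, so that $X$ regular gives $\G(X)=\K(X)$ via \eqref{eq:regular}. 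Since $Y$ is regular, \autoref{prop:multiplicitychow} applied to the correspondence $Y\cor \tilde{Y}$ coming from the inverse rational map $Y\dasharrow \tilde{Y}$ gives $n_{\tilde{Y}}\mid n_Y$, while the proper morphism $\pi$ gives $n_Y\mid n_{\tilde{Y}}$; hence $n_Y=n_{\tilde{Y}}$. Likewise $\M{(\G)}{\tilde{Y}}\subseteq \M{(\G)}{Y}$, since $\pi$ is proper.

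For \ref{it:nep}, the bound $\nep{X}\leq \nep{Y}$ is immediate: the rational map $Y\dasharrow X$ is a correspondence $Y\cor X$ of multiplicity $1$, so \autoref{prop:multiplicitychow} yields $n_X\mid n_Y$. For the remaining bound I would pick a closed point $\phi$ of $F$ with $v_p([k(\phi):k(X)])=\nep{F}$ and let $W\subseteq \tilde{Y}$ be its closure. Then $g$ restricts to a dominant, generically finite morphism $W\to X$ of degree $\delta=[k(\phi):k(X)]$, which I view as a correspondence $X\cor W$ of multiplicity $\delta$. As $X$ is regular, \autoref{prop:multiplicitychow} gives $n_W\mid \delta\cdot n_X$; since the closed points of $W$ are closed points of $\tilde{Y}$ we have $n_{\tilde{Y}}\mid n_W$, and therefore $n_Y=n_{\tilde{Y}}\mid \delta\cdot n_X$, i.e. $\nep{Y}\leq \nep{F}+\nep{X}$.

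For \ref{it:dep}, the bound $\dep{X}\leq \dep{Y}$ follows again from the correspondence $Y\cor X$ of multiplicity $1$, now via \autoref{prop:cor}. The content is the reverse bound when $\dep{F}=0$. I would choose a coherent sheaf $\mathcal{E}$ on $F$ with $\chi(F,\mathcal{E})$ prime to $p$, extend it to a coherent sheaf $\tilde{\mathcal{E}}$ on $\tilde{Y}$, and set $\alpha=g_*[\tilde{\mathcal{E}}]\in \G(X)=\K(X)$. By flat base change along $\eta\to X$, the restriction of $\alpha$ to $k(X)$ equals $\chi(F,\mathcal{E})$, so $\alpha$ has rank prime to $p$; as in \S\ref{sect:Ktheory}, its rank-zero part lies in the gamma filtration and is nilpotent, so $\alpha$ is invertible in $\K(X)\otimes\Zz_{(p)}$. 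Running the computation of \autoref{lemm:main} with $\alpha$ in place of $f_*u$, I would obtain for every coherent sheaf $\mathcal{G}$ on $X$, using \ref{def:mprojformula},
\[
\chi(X,\mathcal{G})=x_*[\mathcal{G}]=x_*g_*\big([\tilde{\mathcal{E}}]\cdot_g(\alpha^{-1}\cdot[\mathcal{G}])\big)=(x\circ g)_*\big([\tilde{\mathcal{E}}]\cdot_g(\alpha^{-1}\cdot[\mathcal{G}])\big),
\]
which lies in $\M{(\G)}{\tilde{Y}}\otimes\Zz_{(p)}\subseteq \M{(\G)}{Y}\otimes\Zz_{(p)}$. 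Hence $\M{(\G)}{X}\otimes\Zz_{(p)}\subseteq \M{(\G)}{Y}\otimes\Zz_{(p)}$, that is $\dep{Y}\leq \dep{X}$, which completes \ref{it:dep}.

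The step I expect to be the main obstacle is the invertibility of $\alpha=g_*[\tilde{\mathcal{E}}]$ in $\K(X)\otimes\Zz_{(p)}$. It bundles together two points needing care: the identification of the generic rank of $g_*[\tilde{\mathcal{E}}]$ with the fibrewise Euler characteristic $\chi(F,\mathcal{E})$, which is a flat base change over the generic point of $X$, and the nilpotence of rank-zero classes in $\K(X)$, resting on the vanishing of high terms of the gamma filtration recalled in \S\ref{sect:Ktheory}. I would also need $F$ to be complete for $\chi(F,\mathcal{E})$ to be defined and for $\mathcal{E}$ to extend to a coherent sheaf on $\tilde{Y}$, which is the reason for working with the generic fibre of the proper model $g$ rather than of $f$ directly.
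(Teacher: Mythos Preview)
Your proof is correct and follows essentially the same route as the paper. The paper likewise passes to the closure of the graph $Y \xleftarrow{g} Z \xrightarrow{h} X$ with $g$ birational; for \ref{it:dep} it lifts a class $v\in \G(F)\otimes\Zz_{(p)}$ with $l_*v=1$ to $u\in \G(Z)\otimes\Zz_{(p)}$ via the surjection $\rho^*\colon \G(Z)\to \G(F)$, shows $h_*u-[\Oc_X]$ is nilpotent, and concludes as in \autoref{lemm:main}---exactly your argument, phrased with abstract classes rather than an explicit sheaf extension. For the upper bound in \ref{it:nep} the paper invokes \autoref{lemm:ffield} together with the inclusion $F\subset Y_{k(X)}$, whereas you inline that lemma's proof by taking the closure of a closed point of $F$; the content is the same.
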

\begin{proof}
Let us prove \ref{it:dep}. The map $f$ induces a correspondence $Y \xleftarrow{g} Z \xrightarrow{h} X$ with $g$ birational. By \autoref{lemm:main} we have $\de{Y}=\de{Z}$. 

We now prove the equality $\dep{X}=\dep{Z}$. We have a cartesian square
\[ \xymatrix{
F\ar[r]^\rho \ar[d]_l & Z \ar[d]^h \\ 
k(X) \ar[r]_\eta & X
}\]
Since $\dep{F}=0$, there is $v\in \G(F)\otimes \Zz_{(p)}$ such that 
\[
l_*v=1\in \Zz_{(p)} = \G(k(X))\otimes \Zz_{(p)}.
\]
Since $\rho^*\colon \G(Z) \to \G(F)$ is surjective (by the localisation sequence), we can find $u\in \G(Z)\otimes \Zz_{(p)}$ such that $\rho^*u=v$. Then we have
\[
\eta^* \circ h_*u=l_* \circ \rho^*u=l_*v=1=\eta^*[\Oc_X].
\]
It follows that the element $h_*u - [\Oc_X]$ is in the kernel of $\eta^*$, hence is nilpotent; therefore $h_*u$ is invertible in $\G(X)\otimes \Zz_{(p)}$. We can now conclude that $\dep{X}=\dep{Z}$, as in \autoref{lemm:main}.

The first inequality in \ref{it:nep} follows from  \autoref{prop:multiplicitychow}. To prove the second, note that $F \subset Y_{k(X)}$ and use \autoref{lemm:ffield} below.
\end{proof}

\begin{example}
Let $p$ be an odd prime. Let $H$ be a regular hypersurface of degree $p$ and dimension $p-1$, with $\nep{H}=1$. Let $X$ be a regular complete variety admitting a rational map $X \dasharrow H$, whose generic fiber $F$ is also a hypersurface of degree $p$ and dimension $p-1$. 

We have $\dep{F}=\dep{H}=0$ by \autoref{ex:completeinter}, and therefore $\dep{X}=0$ by \autoref{prop:fiber} \ref{it:dep}. Moreover $\nep{F} \leq 1$ by \autoref{th:main}, hence by \autoref{prop:fiber} \ref{it:nep}, we have $1 \leq \nep{X} \leq 2$. 

We use \autoref{cor:cdim}. If $\nep{X}=2$, then $\cdim_p(X)=\dim X=2(p-1)$. If $\nep{X}=1$, then $\cdim_p(X) \geq p-1$. This bound is sharp: if $\nep{F}=0$, then $X$ and $H$ are $p$-equivalent, hence have the same canonical $p$-dimension; but $\cdim_p(H)=\dim H=p-1$.
\end{example}

\section{Invariants of function fields}
\label{sect:ffields}

\begin{definition}
\label{def:function}
Let $K/k$ be a finitely generated field extension. Let $M$ be a regular projective variety such that $k(M)$ contains $K$ as a $k$-subalgebra with $[k(M):K]$ finite and prime to $p$. We define $\dep{K/k}:=\dep{M}$ and $\nep{K/k}:=\nep{M}$. By \autoref{lemm:indep} below, and \autoref{cor:main}, these integers do not depend on the choice of $M$. If there is no such $M$, we set $\dep{K/k}=\nep{K/k}=\infty$ (by Gabber's theorem, this may only happen in characteristic $p$).
\end{definition}

An alternative definition of $\nep{K/k}$ can be found in \cite[Remark~7.7]{Mer-St-03}. Note that, when $X$ is a complete variety, we have (by \autoref{prop:cor} and \autoref{prop:multiplicitychow})
\[
\dep{X} \leq \dep{k(X)/k} \quad \quad \text{ and } \quad \quad \nep{X} \leq \nep{k(X)/k},
\]
with equalities when $X$ is regular (by \autoref{lemm:indep}).

\begin{remark}
This process can be used more generally to define invariants $\M{H}{K/k}\otimes \Zz_{(p)}$, when the pair $(R, H)$ satisfies \autoref{def:main}.
\end{remark}

\begin{remark}[Geometrically unirational field extensions]
\label{prop:zero}
When $k$ has characteristic zero, it is possible to have some control on $\dep{K/k}$ without explicitly introducing a variety $M$ as in \autoref{def:function}. Namely assume that there is a field extension $l/k$ such that the ring $K \otimes_k l$ is contained in a purely transcendental field extension of $l$. Then $\dep{K/k}=0$ for any $p$. (Indeed by Hironaka's resolution of singularities \cite{Hir-64}, there is a smooth projective variety $M$ with function field $K$. Then $M$ is geometrically unirational, hence geometrically rationally connected, and $\de{M}=1$ by \autoref{ex:rationallyconnected}.)
\end{remark}

\begin{lemma}
\label{lemm:indep}
Let $X_1 \to X \leftarrow X_2$ be a diagram of varieties and proper morphisms, generically finite of degree prime to $p$. Then $X_1$ is $p$-equivalent to $X_2$.
\end{lemma}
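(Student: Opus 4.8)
The plan is to produce directly the two correspondences required by the definition of $p$-equivalence, by composing the evident correspondences relating each $X_i$ to $X$ and then extracting a component of good multiplicity. Write $f_1\colon X_1\to X$ and $f_2\colon X_2\to X$ for the two given morphisms, of degrees $d_1=[k(X_1):k(X)]$ and $d_2=[k(X_2):k(X)]$, both prime to $p$. By symmetry it suffices to exhibit a correspondence $X_1\cor X_2$ of multiplicity prime to $p$, since exchanging the roles of $X_1$ and $X_2$ then yields $X_2\cor X_1$. Note that one cannot simply invoke transitivity of $p$-equivalence through $X$, as transitivity is only available for complete regular varieties; the composition must be carried out by hand.

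First I would form the fibre product $P=X_1\times_X X_2$, with projections $\pi_1,\pi_2$ onto the two factors; these are proper, being base changes of the proper maps $f_2,f_1$. The naive candidate $X_1 \leftarrow P \to X_2$ is not yet a correspondence, because $P$ need be neither integral nor generically finite of controlled degree over $X_1$, so I must replace $P$ by a suitable irreducible component. To locate it, I would inspect the fibre of $\pi_1$ over the generic point of $X_1$, namely $\Spec A$ with $A=k(X_1)\otimes_{k(X)}k(X_2)$, a finite $k(X_1)$-algebra of dimension $d_2$.

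The key step is then a length count on the Artinian algebra $A$. Decomposing $A=\prod_j A_j$ into its local factors, with residue fields $L_j$ and lengths $\ell_j$, gives $d_2=\dim_{k(X_1)}A=\sum_j \ell_j\,[L_j:k(X_1)]$; since $p\nmid d_2$, some index $j$ satisfies $p\nmid[L_j:k(X_1)]$. I would then take $Z$ to be the reduced closure in $P$ of the point of $P$ lying over the generic point of $X_1$ and attached to this factor $A_j$. One has $k(Z)=L_j$, the proper morphism $\pi_1|_Z\colon Z\to X_1$ is dominant (its image is closed and contains the generic point) and hence generically finite of degree $[L_j:k(X_1)]$ prime to $p$, while $\pi_2|_Z\colon Z\to X_2$ is proper. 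The diagram $X_1 \xleftarrow{\pi_1|_Z} Z \xrightarrow{\pi_2|_Z} X_2$ is thus precisely a correspondence $X_1\cor X_2$ of multiplicity prime to $p$.

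The only genuine obstacle is the one just addressed: passing from the fibre product, which carries no integrality or separability guarantees, to an honest prime correspondence of the right multiplicity. Everything hinges on the observation that the total $k(X_1)$-dimension of the generic fibre of $\pi_1$ equals the prime-to-$p$ integer $d_2$, which forces at least one residue extension $L_j/k(X_1)$ to have degree prime to $p$. The remaining properness and genericity verifications should be routine, since all maps in sight are restrictions of the proper projections $\pi_i$ and the chosen point sits over the generic point of $X_1$.
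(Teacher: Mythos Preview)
Your proof is correct and follows essentially the same route as the paper: form the fibre product $X_1\times_X X_2$, analyse its generic fibre over $X_1$ as the Artinian algebra $k(X_1)\otimes_{k(X)}k(X_2)$, and use the length decomposition to find a residue field $L_j$ with $[L_j:k(X_1)]$ prime to $p$, whose closure gives the desired correspondence. Your added remark that transitivity of $p$-equivalence through $X$ cannot be invoked (since $X$ is not assumed regular) is a helpful clarification not made explicit in the paper.
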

\begin{proof}
Consider the cartesian square
\[ \xymatrix{
X'\ar[r]^{g_1} \ar[d]_{g_2} & X_2 \ar[d]^{f_2} \\ 
X_1 \ar[r]_{f_1} & X
}\]
We claim that $X'$ has an irreducible component which is generically finite of degree prime to $p$ over $X_1$; this component then gives a correspondence $X_1 \cor X_2$ of multiplicity prime to $p$, and the lemma follows by symmetry.

Since $f_i$ is generically finite and dominant, and $X_i$ is integral, the generic fiber of $f_i$ is the spectrum of $K_i:=k(X_i)$ (for $i=1,2$). The ring $B=K_1 \otimes_{k(X)} K_2$ is artinian, and its spectrum is the generic fiber of $g_2$. Write $B=B_1 \times \cdots \times B_n$, with $B_j$ an artinian local ring with residue field $L_j$ (for $j=1,\cdots,n$). We have
\[
[K_2:k(X)] = \dim_{K_1} B = \sum_{j=1}^n \dim_{K_1} B_j = \sum_{j=1}^n [L_j:K_1] \cdot \lgth B_j
\]
(for the last equality, use e.g.\ \cite[Lemma~A.1.3]{Ful-In-98}, replacing $A,B,M$ with $K_1,B_j,B_j$). But the leftmost integer is prime to $p$ by hypothesis. It follows that for some $j$ the integer $[L_j:K_1]$ is prime to $p$. The closure of $\Spec L_j$ in $X'$ then gives the required irreducible component.
\end{proof}

\begin{lemma}
\label{lemm:ffield}
Let $X$ and $M$ be complete varieties, with $M$ regular. Then
\[
\nep{X} \leq \nep{X_{k(M)}} + \nep{M}.
\]
\end{lemma}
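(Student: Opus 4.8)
The plan is to reduce the statement to \autoref{prop:multiplicitychow} by manufacturing a correspondence $M \cor X$ whose multiplicity is governed by $\nep{X_{k(M)}}$. Since the $p$-adic valuation of a g.c.d.\ of positive integers is the minimum of their $p$-adic valuations, there is a closed point $y$ of $X_{k(M)}$ with $v_p(\deg y)=\nep{X_{k(M)}}$, where $\deg y=[k(M)(y):k(M)]$. First I would fix such a point.

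Next I would view $X_{k(M)}=X\times_k k(M)$ as the generic fibre of the projection $\pi_M\colon X\times_k M\to M$, so that $y$ becomes a (non-closed) point of $X\times_k M$ lying over the generic point of $M$. Let $W$ be the closure of $y$ in $X\times_k M$, with its reduced structure; it is an integral closed subvariety whose function field is $k(W)=k(M)(y)$. Restricting the two projections yields a diagram $M\xleftarrow{\pi_M}W\xrightarrow{\pi_X}X$. Both maps are proper, because $X$ and $M$ are complete, and $\pi_M|_W$ is dominant and generically finite: its generic fibre is $\Spec k(W)=\Spec k(M)(y)$ over $\Spec k(M)$. Hence $\pi_M|_W$ has degree $[k(W):k(M)]=[k(M)(y):k(M)]=\deg y$, so the diagram is a correspondence $M\cor X$ of multiplicity $\deg y$.

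Now $M$ is regular, so I would invoke \autoref{prop:multiplicitychow} with $Y=M$, obtaining $n_X\mid \deg y\cdot n_M$. Taking $p$-adic valuations gives
\[
\nep{X}\leq v_p(\deg y)+\nep{M}=\nep{X_{k(M)}}+\nep{M},
\]
which is the desired inequality.

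The only genuinely delicate point is the multiplicity computation in the second step: one must check that the degree of the generically finite map $\pi_M|_W\colon W\to M$ equals the degree of $y$ over $k(M)$. This amounts to identifying the generic fibre of $\pi_M|_W$ with the one-point scheme $\Spec k(M)(y)$, which follows from the fact that the function field of the closure $W$ is the residue field of its generic point $y$. Everything else is routine bookkeeping within the correspondence formalism already set up in the paper.
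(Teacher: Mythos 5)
Your proof is correct and follows essentially the same route as the paper: the paper likewise chooses a finite extension $L/k(M)$ with $X_{k(M)}(L)\neq\emptyset$ and $v_p[L:k(M)]=\nep{X_{k(M)}}$ (equivalently, the residue field of your closed point $y$), takes the closure of the image of $\Spec L \to M\times_k X$ to get a correspondence $M\cor X$ of multiplicity $[L:k(M)]$, and concludes via \autoref{prop:multiplicitychow}. The only difference is cosmetic: you phrase the construction via the closure of a closed point of the generic fibre rather than via an $L$-point, and you spell out the identification of the generic fibre of $W\to M$ with $\Spec k(M)(y)$, which the paper leaves implicit.
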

\begin{proof}
Write $E=k(M)$. Let $L/E$ be a finite field extension such that $X_E(L) \neq \emptyset$ and $v_p[L\colon E]=\nep{X_E}$. Then the closure of the image of the map induced by an $L$-point of $X_E$
\[
\Spec L \hookrightarrow X_E=(\Spec E) \times_k X \to M \times_k X
\]
gives a correspondence $M \cor X$, whose multiplicity is $[L\colon E]$. By \autoref{prop:multiplicitychow}, we have
\[
\nep{X}\leq v_p[L\colon E] + \nep{M} = \nep{X_E} + \nep{M}.\qedhere
\]
\end{proof}

\begin{proposition}
\label{prop:rigidity}
Let $K/k$ be a finitely generated field extension. Let $X$ be a complete variety. Then
\[
\nep{X} \leq \nep{X_K} + \nep{K/k}.
\]
\end{proposition}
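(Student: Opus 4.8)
The plan is to reduce the statement to \autoref{lemm:ffield} by choosing for $M$ the regular projective variety furnished by \autoref{def:function}. First, if no such $M$ exists then $\nep{K/k}=\infty$ and the inequality is vacuous, so I may assume there is a regular projective $M$ with $K \subseteq E := k(M)$, where $[E:K]$ is finite and prime to $p$, and $\nep{M}=\nep{K/k}$. Applying \autoref{lemm:ffield} to the complete varieties $X$ and $M$ (with $M$ regular) gives
\[
\nep{X} \leq \nep{X_{E}} + \nep{M} = \nep{X_{E}} + \nep{K/k}.
\]
It therefore remains only to compare $\nep{X_{E}}$ with $\nep{X_K}$.

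The key step is then to show that base change along the extension $E/K$ does not increase the $p$-adic valuation of the index, i.e.\ that $\nep{X_{E}} \leq \nep{X_K}$. Since $K \subseteq E$ I can write $X_{E}=X_K \times_K E$, so it suffices to treat base change of the $K$-variety $X_K$ along an arbitrary field extension. I would argue pointwise: let $x$ be a closed point of $X_K$ whose degree $d=[k(x):K]$ satisfies $v_p(d)=\nep{X_K}$ (such a point exists because the valuation of the index is the minimum of the valuations of the degrees). The fibre of $X_{E}$ above $x$ is $\Spec(k(x)\otimes_K E)$, a finite $E$-scheme with $\dim_E(k(x)\otimes_K E)=d$. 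Decomposing $k(x) \otimes_K E$ into artinian local factors with residue fields $F_j$ and lengths $\ell_j$ (exactly as in the proof of \autoref{lemm:indep}), I obtain $d=\sum_j [F_j:E]\cdot \ell_j$, whence $v_p(d)\geq \min_j v_p([F_j:E])$. Each $F_j$ is the residue field of a closed point of $X_{E}$, so $n_{X_{E}}$ divides $[F_j:E]$; taking the minimising index $j$ yields $\nep{X_{E}}\leq v_p([F_j:E]) \leq v_p(d)=\nep{X_K}$. Combining this with the displayed inequality above proves the proposition.

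The main obstacle I anticipate is not the base-change inequality itself, which is elementary, but rather keeping track of the base fields correctly: \autoref{lemm:ffield} produces a bound in terms of $X_{k(M)}$, and one must pass from the function field $k(M)$ back to the prescribed field $K$. A minor point to be careful about is that $X_K$ and $X_{E}$ need not be integral, so the index has to be read off from all closed points rather than from a generic point; the artinian decomposition argument is robust to this. Finally, note that the primality to $p$ of $[E:K]$, although built into the definition of $M$, is not actually needed for the comparison step — the inequality $\nep{X_{E}}\leq \nep{X_K}$ holds for any field extension $E \supseteq K$.
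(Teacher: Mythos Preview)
Your proof is correct and follows exactly the paper's approach: reduce to \autoref{lemm:ffield} via a regular projective model $M$, then compare $\nep{X_{k(M)}}$ with $\nep{X_K}$. The paper's own proof is a two-line argument that simply asserts $\nep{X_{k(M)}} \leq \nep{X_K}$ without justification; your artinian-decomposition verification of this base-change inequality (and your observation that it holds for arbitrary extensions $E/K$, not just those of degree prime to $p$) is a welcome elaboration, but not a different method.
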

\begin{proof}
We may assume that there is a projective regular variety $M$ with $[k(M):K]$ finite and prime to $p$ (otherwise the statement is empty). Since $\nep{X_{k(M)}} \leq \nep{X_K}$, the statement follows from \autoref{lemm:ffield}.
\end{proof}

\begin{proposition}
\label{th:model}
Let $K/k$ be a finitely generated field extension. Then
\[
\nep{K/k} \leq \dep{K/k} + \Big[ \frac{\trdeg(K/k)}{p-1} \Big].
\]
\end{proposition}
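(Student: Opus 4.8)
The plan is to deduce the statement directly from \autoref{th:main}\ref{it:regular}, applied to a model of the extension $K/k$, so that the content is essentially a dictionary translating the geometric bound into the language of function fields. First I would dispose of the degenerate case. If there is no regular projective variety $M$ with $k(M)\supseteq K$ and $[k(M):K]$ finite and prime to $p$, then by \autoref{def:function} we have $\nep{K/k}=\dep{K/k}=\infty$, and the asserted inequality is vacuous. So I may assume such an $M$ is given; by \autoref{lemm:indep} and \autoref{cor:main} the resulting invariants do not depend on the choice, so a single model suffices.

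By the very definition of the invariants, $\nep{K/k}=\nep{M}$ and $\dep{K/k}=\dep{M}$. Since $M$ is regular and projective over $k$, it satisfies hypothesis \ref{it:regular} of \autoref{th:main}, which yields $\nep{M}\leq \dep{M}+\big[\dim M/(p-1)\big]$. The only remaining point is the identification of $\dim M$ with $\trdeg(K/k)$: as $M$ is a $k$-variety we have $\dim M=\trdeg(k(M)/k)$, and since $k(M)/K$ is a finite field extension, $\trdeg(k(M)/k)=\trdeg(K/k)$. Substituting these into the inequality above gives $\nep{K/k}\leq \dep{K/k}+\big[\trdeg(K/k)/(p-1)\big]$, as desired.

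I do not expect a genuine obstacle here: the whole weight of the proposition is carried by \autoref{th:main}\ref{it:regular}, and the rest of the argument is routine. The only things to watch are the trivial handling of the case with no model (which is precisely where the characteristic-$p$ subtlety enters, via Gabber's theorem, as already flagged in \autoref{def:function}) and the elementary observation that replacing $K$ by a finite extension leaves the transcendence degree unchanged.
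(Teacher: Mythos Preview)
Your proposal is correct and follows essentially the same approach as the paper: reduce to a regular projective model $M$ and apply \autoref{th:main}\ref{it:regular}. The paper's proof is a single sentence to this effect, and your additional remarks (the degenerate case, the identification $\dim M=\trdeg(K/k)$) merely spell out details the paper leaves implicit.
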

\begin{proof}
As above, we may assume that there is a smooth projective variety $M$ such that $[k(M):K]$ is prime to $p$, and we apply \autoref{th:main} \ref{it:regular} with $X=M$.
\end{proof}

\begin{corollary}
\label{cor:model}
Let $K/k$ be a field extension of transcendence degree $<p-1$, with $\dep{K/k}=0$. Then $\nep{K/k}=0$.
\end{corollary}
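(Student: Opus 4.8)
The plan is to reduce \cref{cor:model} to the two preceding results in the section. We are given a finitely generated field extension $K/k$ with $\trdeg(K/k) < p-1$ and $\dep{K/k}=0$, and we must show $\nep{K/k}=0$.

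First I would invoke \autoref{th:model}, which directly bounds
\[
\nep{K/k} \leq \dep{K/k} + \Big[ \frac{\trdeg(K/k)}{p-1} \Big].
\]
Now I substitute the two hypotheses. The assumption $\dep{K/k}=0$ kills the first term. For the second term, the hypothesis $\trdeg(K/k) < p-1$ forces the non-negative integer $\trdeg(K/k)$ to satisfy $0 \leq \trdeg(K/k) \leq p-2$, so the quotient $\trdeg(K/k)/(p-1)$ lies in $[0,1)$, and its integer part $[\trdeg(K/k)/(p-1)]$ is zero. Hence the right-hand side is $0+0=0$, giving $\nep{K/k} \leq 0$. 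Since $\nep{K/k}$ is by definition a $p$-adic valuation (or $\infty$), it is non-negative, so $\nep{K/k}=0$.

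There is essentially no obstacle here: the statement is an immediate numerical specialisation of \autoref{th:model}, and the only point worth a word is the boundary behaviour of the floor function, namely that strict inequality $\trdeg(K/k) < p-1$ is exactly what is needed to guarantee $[\trdeg(K/k)/(p-1)]=0$ (whereas $\trdeg(K/k)=p-1$ would produce $1$). One should also note that the case $\nep{K/k}=\infty$ cannot arise under the hypotheses: the finiteness of $\dep{K/k}$ already presupposes, via \autoref{def:function}, the existence of a suitable regular projective model $M$, so \autoref{th:model} applies and delivers a finite bound. Thus the proof is a one-line appeal to \autoref{th:model} combined with $[\trdeg(K/k)/(p-1)]=0$.
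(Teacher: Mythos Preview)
Your proof is correct and matches the paper's approach exactly: the paper states this corollary without proof, treating it as an immediate consequence of \autoref{th:model} via the substitution $\dep{K/k}=0$ and $[\trdeg(K/k)/(p-1)]=0$. Your additional remark that $\dep{K/k}=0$ forces the existence of a model (so $\nep{K/k}\neq\infty$) is a valid clarification.
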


\begin{corollary}
\label{cor:symbol}
Let $K/k$ be a field extension of transcendence degree $<p-1$. Assume that $\dep{K/k}=0$ (see in particular \autoref{prop:zero}). Then
\begin{enumerate}[label=(\roman*)]
\item \label{brauer} The relative Brauer group $\ker(\Br(k) \to \Br(K))$ has no $p$-primary torsion.

\item \label{symbol} Let $\alpha$ be a pure symbol in $K^M_*(k)/p$. Assume that $k$ has characteristic zero. If $\alpha_K=0$ then $\alpha=0$.
\end{enumerate}
\end{corollary}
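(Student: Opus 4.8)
The plan is to reduce each of the two assertions to the vanishing of the invariant $\nep{-}$ on a suitable generic splitting variety, and then to feed this into the rigidity estimate of \autoref{prop:rigidity} together with the vanishing of $\nep{K/k}$. The latter vanishing is immediate from the hypotheses: since $\trdeg(K/k)<p-1$ and $\dep{K/k}=0$, \autoref{cor:model} gives $\nep{K/k}=0$. Consequently, for \emph{any} complete variety $X$ over $k$, \autoref{prop:rigidity} yields $\nep{X}\leq \nep{X_K}+\nep{K/k}=\nep{X_K}$, so that $\nep{X_K}=0$ already forces $\nep{X}=0$. The task is thus to manufacture, for each invariant, a complete variety $X$ for which one has the equivalence ``$\xi_F=0 \iff \nep{X_F}=0$'' for every extension $F/k$; granting such an $X$, the chain $\xi_K=0\Rightarrow\nep{X_K}=0\Rightarrow\nep{X}=0\Rightarrow\xi=0$ closes the argument.

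For part~\ref{brauer}, let $\beta\in\ker(\Br(k)\to\Br(K))$ be $p$-primary and let $D$ be the central division $k$-algebra of class $\beta$; its degree, and hence its index, is a power of $p$. I would take $X$ to be the Severi--Brauer variety of $D$, which is smooth and projective, and which satisfies $\beta_F=0\iff D_F \text{ splits}\iff\nep{X_F}=0$ (for a $p$-primary class, splitting is equivalent to the index being prime to $p$, i.e.\ to the existence of a closed point of degree prime to $p$). Since $\beta_K=0$, the algebra $D_K$ splits and $X_K$ is a projective space, so $\nep{X_K}=0$; the inequality above then gives $\nep{X}=0$, whence $D$ has a splitting field of degree prime to $p$, its index is $1$, and $\beta=0$. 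This argument uses Severi--Brauer varieties only, which exist in all characteristics, in accordance with the absence of a characteristic hypothesis in part~\ref{brauer}. The same conclusion follows even more directly from the regular projective model $M$ of $K$ supplied by $\nep{K/k}=0$: as $\beta$ dies in $\Br(k(M))$ and $\Br(M)\hookrightarrow\Br(k(M))$ by purity for regular schemes, $\beta$ restricts to zero at a closed point of $M$ of degree prime to $p$, and corestriction gives $\beta=0$.

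For part~\ref{symbol}, the identical scheme applies once $X$ is taken to be a generic splitting variety of the pure symbol $\alpha\in K^M_n(k)/p$. Here I would invoke the theory of Rost norm varieties: in characteristic zero there is a smooth projective variety $X=X_\alpha$ (of dimension $p^{n-1}-1$) realising, for every extension $F/k$, the equivalence
\[
\alpha_F=0 \iff \nep{X_F}=0,
\]
that is, $\alpha$ splits over $F$ precisely when $X$ acquires a zero-cycle of degree prime to $p$. With this in hand the proof is formally the same as for part~\ref{brauer}: $\alpha_K=0$ gives $\nep{X_K}=0$, hence $\nep{X}=0$ by the rigidity inequality, hence $\alpha=0$. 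The existence of this norm variety, and above all the nontrivial ``generic splitting'' implication asserting that splitting of $\alpha$ over $F$ produces a prime-to-$p$ zero-cycle on $X_F$, is the genuinely deep input and the main obstacle; it is exactly this ingredient that confines part~\ref{symbol} to pure symbols and to characteristic zero, where resolution of singularities and the construction of norm varieties are at our disposal, whereas the Severi--Brauer variety used in part~\ref{brauer} furnishes the corresponding equivalence unconditionally.
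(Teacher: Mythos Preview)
Your proof is correct and follows essentially the same strategy as the paper: deduce $\nep{K/k}=0$ from \autoref{cor:model}, then apply \autoref{prop:rigidity} to the Severi--Brauer variety for \ref{brauer} and to a norm variety (cited as \cite{Norm-varieties}) for \ref{symbol}. Your supplementary argument for \ref{brauer} via purity for $\Br$ on the regular model $M$ and corestriction from a closed point of degree prime to $p$ is a valid alternative not given in the paper; it bypasses \autoref{prop:rigidity} but relies on the same input $\nep{K/k}=0$.
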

\begin{proof}
We have $\nep{K/k}=0$ by \autoref{cor:model}. To prove \ref{brauer}, let $A$ be central simple $k$-algebra of $p$-primary exponent. Take for $X$ the Severi-Brauer variety of $A$. Then for any field extension $l/k$, the class of $A\otimes_k l$ vanishes in the Brauer group of $l$ if and only if $\nep{X_l}=0$. By \autoref{prop:rigidity}, we have $\nep{X_K}=\nep{X}$, and \ref{brauer} follows.

To prove \ref{symbol}, one can take for $X$ a complete generic $p$-splitting variety \cite{Norm-varieties} for $\alpha$, and argue as above.
\end{proof}

\begin{example}
\label{ex:final}
Let $D$ be a non-trivial $p$-primary central division $k$-algebra, and $K/k$ a splitting field extension for $D$. Assume that $K$ is the function field of a complete, smooth, geometrically rational variety (or more generally that $\dep{K/k}=0$). \autoref{cor:symbol}, \ref{brauer} says that $\trdeg(K/k) \geq p-1$. One may ask whether we always have 
\[
\trdeg(K/k) \geq \ind(D)-1.
\]
In other words, has the Severi-Brauer variety of $D$ the smallest possible dimension among the complete, smooth, geometrically rational varieties whose function field splits $D$?
\end{example}

{\bf Acknowledgements.} I would like to thank Nikita Karpenko for his comments on this paper, and the referee for his/her careful reading of the paper and constructive remarks. The first version of this paper has been written while I was employed by the University of Nottingham.

\bibliographystyle{alpha}

\end{document}